\newcommand{\C}{\mathbb{C}}
\newcommand{\ZZ}{\mathbb{Z}}
\newcommand{\RR}{\mathbb{R}}
\newcommand{\QQ}{\mathbb{Q}}
\newcommand{\PP}{\mathbb{P}}
\newcommand{\GG}{\mathbb{G}}
\newcommand{\OO}{\mathcal O}
\DeclareMathOperator{\pic}{Pic}
\DeclareMathOperator{\NS}{NS}
\DeclareMathOperator{\gr}{Gr}
\newcommand{\rom}{\romannumeral}
\DeclareMathOperator{\psef}{Psef}
\DeclareMathOperator{\hpsef}{Hpsef}
\DeclareMathOperator{\eff}{Eff}
\DeclareMathOperator{\nef}{Nef}
\DeclareMathOperator{\vol}{vol}
\DeclareMathOperator{\coeff}{coeff}
\newtheorem{convention}{Conventions}
\newtheorem{nonumbering}{Theorem}
\newtheorem{nonumberingc}{Corollary}
 \journalname{}
\begin{document}

\title{A note about connectedness theorems \`a la Barth}

\author{Robert Laterveer}

\institute{CNRS - IRMA, Universit\'e de Strasbourg \at
              7 rue Ren\'e Descartes \\
              67084 Strasbourg cedex\\
              France\\
              \email{laterv@math.unistra.fr}   }

\date{Received: date / Accepted: date}

\maketitle

\begin{abstract} We prove Barth--type connectedness results for low--codimension smooth subvarieties with good numerical properties inside certain ``easy'' ambient spaces (such as homogeneous varieties, or spherical varieties). The argument employs some basics from the theory of cones of cycle classes, in particular the notion of bigness of a cycle class.
 \end{abstract}

\keywords{Barth theorem \and connectedness \and positivity \and cones of cycle classes \and homogeneous varieties \and spherical varieties}

\subclass{14F45, 14M07, 14M15, 14M17, 14M27, 14C99}

\section{Introduction}

The Mother Of All Connectedness Theorems is Barth's theorem. In its original version, Barth's theorem is about the cohomology of low--codimensional smooth subvarieties of projective space:

\begin{theorem}[Barth \cite{Ba}]\label{barth} Let $X\subset\PP^{n+r}(\C)$ be a smooth subvariety of dimension $n$. Then restriction induces isomorphisms
  \[ H^j(\PP^{n+r}(\C),\QQ)\ \xrightarrow{\cong}\ H^j(X,\QQ)\ \ \ \hbox{for\ all\ }j\le n-r\ .\]
  \end{theorem}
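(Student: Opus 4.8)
\smallskip
\noindent\emph{Sketch of a proof.}\quad The plan is to treat the injectivity and the surjectivity of the restriction maps separately. Write $i\colon X\hookrightarrow\PP^{n+r}(\C)$ for the inclusion and put $N=n+r$. Injectivity is elementary; surjectivity is the real content of the theorem and will be extracted from the positivity of $X$ as a subvariety of $\PP^{N}$ --- concretely, from the ampleness of the normal bundle $N_{X/\PP^{N}}$, which is a quotient of the ample bundle $T_{\PP^{N}}|_{X}$ and hence ample.

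First I would dispose of injectivity. Let $h\in H^{2}(\PP^{N},\QQ)$ denote the hyperplane class. By the projection formula the composite $i_{*}\circ i^{*}\colon H^{j}(\PP^{N},\QQ)\to H^{j+2r}(\PP^{N},\QQ)$ is cup product with $i_{*}(1)=[X]=(\deg X)\,h^{r}$. Since $H^{\bullet}(\PP^{N},\QQ)=\QQ[h]/(h^{N+1})$, cup product with $h^{r}$ is injective on $H^{j}(\PP^{N},\QQ)$ whenever $j\le 2n$, while $\deg X$ is invertible in $\QQ$; hence $i^{*}$ is injective in every degree $j\le 2n$, a fortiori for $j\le n-r$. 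It therefore remains to prove that $i^{*}$ is onto for $j\le n-r$, i.e.\ that $H^{j}(X,\QQ)$ is spanned by the powers of $h|_{X}$ in that range.

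For surjectivity I would invoke a Barth--Lefschetz theorem for subvarieties with ample normal bundle, proved along one of the classical routes. One possibility is Hartshorne's method of formal functions, where the ampleness of $N_{X/\PP^{N}}$ controls the cohomology of the infinitesimal neighbourhoods of $X$ in $\PP^{N}$ and thereby forces the algebraic cohomology to agree with the formal one in the relevant range. Another is the homotopy--theoretic route (Barth--Larsen; alternatively Fulton--Hansen together with its refinements by Deligne and by Goresky--MacPherson), in which one studies the diagonal $\Delta\subset\PP^{N}\times\PP^{N}$ --- the zero scheme of a regular section of the globally generated rank--$N$ bundle $p_{1}^{*}\OO_{\PP^{N}}(1)\otimes p_{2}^{*}\bigl(T_{\PP^{N}}(-1)\bigr)$ --- and deduces the homotopy vanishing $\pi_{j}(\PP^{N},X)=0$ for $j\le n-r$. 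I expect the main obstacle, common to all incarnations, to be this: the geometric input is wildly non--transverse, since $\Delta\cap(X\times X)=\Delta_{X}\cong X$ has dimension $n$ rather than the expected value $n-r=2n-N$; consequently no off--the--shelf Lefschetz hyperplane theorem, nor the Lefschetz theorem for regular sections of ample bundles, applies directly. Taming this excess --- by an excess--intersection refinement, or by a stratified Morse--theory estimate --- is exactly where the ampleness of $N_{X/\PP^{N}}$ has to be fed in; everything else is formal.
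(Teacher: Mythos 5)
Your injectivity argument is fine, but the surjectivity half is not a proof: you reduce it to ``a Barth--Lefschetz theorem for subvarieties with ample normal bundle,'' which is essentially the statement to be proved, and you then explicitly leave open the key step (``taming the excess'' in the Fulton--Hansen/Deligne route, or the formal-functions comparison in Hartshorne's first approach). As written, the hard part of the theorem is named but not carried out, so there is a genuine gap.

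You are also working much harder than necessary. The paper points to Hartshorne's hard Lefschetz proof, which needs neither the ampleness of $N_{X/\PP^{N}}$ nor any transversality: by the projection formula the composite $i^{*}\circ i_{*}\colon H^{j}(X,\QQ)\to H^{j+2r}(X,\QQ)$ is cup product with $i^{*}[X]=(\deg X)\,(h|_{X})^{r}$, and hard Lefschetz on $X$ (for the ample class $h|_{X}$) makes $\cup\,(h|_{X})^{r}$ injective on $H^{j}(X,\QQ)$ exactly when $j\le n-r$. Hence $i_{*}$ is injective on $H^{j}(X,\QQ)$ in that range, so $\dim H^{j}(X,\QQ)\le\dim H^{j+2r}(\PP^{N},\QQ)=\dim H^{j}(\PP^{N},\QQ)$; combined with your injectivity of $i^{*}$ this forces $i^{*}$ to be an isomorphism. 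The only positivity input is the ampleness of the hyperplane class on $X$, which powers hard Lefschetz --- this is precisely the mechanism the paper later generalizes, replacing ampleness by bigness of the class of $X$. I would rewrite your surjectivity step along these lines (or else actually supply the excess-intersection/stratified-Morse estimate you allude to, which is a substantial undertaking).
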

  
 Hartshorne \cite{Har} found a nice proof of theorem \ref{barth} based on the hard Lefschetz theorem. Subsequent extensions of Barth's theorem also establish connectedness results for homotopy groups, as well as for low--codimensional subvarieties of other ambient spaces, such as Grassmannians, rational homogeneous varieties or abelian varieties (cf. \cite{FL}, \cite{Fu}, \cite[Chapter 3]{Laz} for comprehensive overviews).
As is made clear by results of Debarre, in certain ambient spaces $P$ a connectedness result holds for any subvariety $X$ with an appropriate intersection--theoretic behaviour in $P$:

\begin{theorem}[Debarre \cite{Deb}]\label{OD} Let $P$ be a product of projective spaces or a Grassmannian, with $\dim P=n+r$. Let $X\subset P$ be a smooth subvariety of dimension $n\ge r+1$ which is {\em bulky\/} (i.e., $X$ meets all $r$--dimensional subvarieties of $P$). Then
  $X$ is simply connected.
  \end{theorem}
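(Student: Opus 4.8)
\emph{The plan.}  Since a product of projective spaces and a Grassmannian are both simply connected, the assertion is purely intrinsic to $X$, so I would aim to show that $X$ admits no connected finite \'etale cover of degree $\geq 2$ (granting that $X$ is connected, which follows from bulkiness by a variant of the same argument), and then deduce $\pi_1(X)=1$.  The device producing the first half will be a connectedness theorem of Fulton--Hansen type; the crux is that it must be made to work \emph{inside $P$} rather than inside a projective space.

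\emph{The connectedness input.}  I would extract a statement of the shape: if $V$ is an irreducible complete variety and $g\colon V\to P$ a morphism with $\dim g(V)+\dim X>\dim P$, then $g^{-1}(X)$ is connected.  For $P=\PP^{N}$ this is precisely Fulton--Hansen.  For a genuine product $P$ the naive numerical version is \emph{false} --- in $\PP^{a}\times\PP^{1}$ the subvarieties $\PP^{a}\times\{p\}$ and $\PP^{a}\times\{q\}$ have dimensions summing well above $\dim P+1$ yet are disjoint --- and this is exactly where bulkiness must enter.  Bulkiness says that $X$ meets every subvariety of dimension $r=\codim_{P}X$, equivalently that $[X]$ has positive intersection with every non-zero effective class of complementary dimension (a bigness-type condition on the cycle class $[X]$); and since any translate $\sigma(X)$ under $\operatorname{Aut}(P)$ again has dimension $n\geq r$, bulkiness forces $X\cap\sigma(X)\neq\emptyset$ for \emph{every} automorphism $\sigma$.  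I would use precisely this uniform incidence of $X$ with all of its translates to compensate for the failure of the diagonal $\Delta_{P}\subset P\times P$ (and of the analogous incidence subvariety of a Grassmannian) to be cut out by ample divisors, making the Fulton--Hansen degeneration argument run in $P$.

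\emph{Ruling out covers.}  Given a connected finite \'etale cover $\pi\colon\widetilde X\to X$ of degree $d\geq 2$, I would form $\widetilde X\times_{X}\widetilde X\subset\widetilde X\times\widetilde X$.  Since $\pi$ is \'etale, the diagonal $\Delta_{\widetilde X}$ is open and closed in $\widetilde X\times_{X}\widetilde X$ and is a proper piece because $d\geq 2$, so $\widetilde X\times_{X}\widetilde X$ is disconnected.  On the other hand $\widetilde X\times_{X}\widetilde X=(\pi\times\pi)^{-1}(\Delta_{X})$ is the preimage of $\Delta_{X}=(X\times X)\cap\Delta_{P}$ under $\widetilde X\times\widetilde X\xrightarrow{\pi\times\pi}X\times X\hookrightarrow P\times P$; its image $X\times X$ has dimension $2n$, so $\dim(X\times X)+\dim\Delta_{P}=2n+\dim P>\dim(P\times P)$ since $n\geq r+1$, and it meets every translate of $\Delta_{P}$ by the previous paragraph.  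Applying the connectedness theorem in $P\times P$ to $\Delta_{P}$ then forces $\widetilde X\times_{X}\widetilde X$ to be connected --- a contradiction, so $d=1$.

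\emph{The main obstacle.}  The real work is twofold.  The heart of the matter is establishing the connectedness theorem in $P$, and in $P\times P$ relative to $\Delta_{P}$, at a threshold using only the bulkiness of $X$: in the product case this means measuring how far $\Delta_{P}$ is from a complete intersection of ample divisors and showing the deficiency is absorbed by ``$X$ meets everything of complementary dimension'', while the Grassmannian case calls for the Schubert-calculus description of which cycle classes are represented by subvarieties meeting all subvarieties of a given dimension.  The second, more technical point is the passage from ``$X$ has no nontrivial connected finite \'etale cover'' to ``$\pi_1(X)=1$'', since the cover argument a priori only controls the profinite completion of $\pi_1(X)$; I would supplement it --- using $H^{1}(X)=0$, itself a by-product of the same connectedness ideas, together with a Lefschetz reduction to a low-dimensional smooth section --- to rule out an infinite fundamental group with no proper subgroup of finite index.
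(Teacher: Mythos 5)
First, a point of reference: the paper does not prove Theorem \ref{OD} --- it is quoted from Debarre \cite{Deb} --- so the comparison can only be with that source. Your outline is in fact the strategy Debarre follows: a Fulton--Hansen-type connectedness theorem adapted to products of projective spaces and to Grassmannians, applied via the fibre-square over the diagonal $\Delta_P\subset P\times P$; and your observation that the naive dimensional statement fails in a product (the disjoint fibres $\PP^a\times\{p\}$ and $\PP^a\times\{q\}$) correctly locates where bulkiness must enter. But as a proof the proposal has two genuine gaps. The first you concede yourself: the connectedness theorem in $P$, and in $P\times P$ relative to $\Delta_P$, is the entire content of Debarre's paper, and nothing in the sketch establishes it. ``$X$ meets every translate $\sigma(X)$'' is a \emph{consequence} of bulkiness, not a mechanism; what is actually needed is a connectedness statement whose hypotheses are phrased in terms of the multidegrees (resp.\ Schubert coefficients) of the subvarieties involved, together with the verification that bulkiness forces the relevant coefficients of $[X]$ to be positive, and a degeneration argument handling the fact that $\Delta_P$ is not an ample complete intersection. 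Until that is supplied, ``applying the connectedness theorem in $P\times P$ to $\Delta_P$'' invokes an unproved statement.

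The second gap is the passage from ``no nontrivial connected finite \'etale cover'' to $\pi_1(X)=1$, and here the fix you propose does not work. The fibre-product argument controls only the profinite completion of $\pi_1(X)$, and adding $H^1(X,\QQ)=0$ (trivial abelianization) still does not force triviality: there exist infinite, finitely presented, perfect groups with no proper finite-index subgroups, and neither $b_1=0$ nor a Lefschetz reduction to a low-dimensional section excludes them as fundamental groups. The standard repair --- and the one used in this circle of ideas, following Deligne's homotopy version of Fulton--Hansen as in Fulton--Lazarsfeld \cite{FL} --- is to prove the \emph{homotopy} form of the connectedness theorem: for suitable $f\colon Z\to P\times P$ the map $\pi_1\bigl(f^{-1}(\Delta_P)\bigr)\to\pi_1(Z)$ is surjective. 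Applied to $Z=X\times X$ this says the diagonal homomorphism $\pi_1(X)\to\pi_1(X)\times\pi_1(X)$ is surjective, which forces $\pi_1(X)=1$ outright, with no discussion of covering spaces at all. As written, your argument proves strictly less than the theorem asserts.
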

 
 Results similar in spirit have been obtained by Arrondo--Caravantes \cite{AC}, and by
 Perrin \cite{Per}, \cite{Per2}:
 
 \begin{theorem}[Arrondo--Caravantes \cite{AC}]\label{AC} Let $P$ be the Grassmannian of lines in a projective space, with $\dim P=n+r$. Let $X\subset P$ be a smooth bulky subvariety of dimension $n\ge r+2$. Then
   \[ \pic(X)=\ZZ\ .\]
   \end{theorem}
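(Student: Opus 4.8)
The plan is to isolate a topological input, handled by Theorem~\ref{OD}, after which the statement reduces to a single cohomological assertion in degree two governed by the positivity of the class $[X]$. Since $n\ge r+2\ge r+1$ and $X$ is bulky, Theorem~\ref{OD} gives that $X$ is simply connected, and from this the shape of $\pic(X)$ is forced. Indeed $H^1(X,\ZZ)=0$ makes $H^2(X,\ZZ)$ torsion--free, and $b_1(X)=0$ together with the Hodge decomposition gives $H^1(X,\OO_X)=0$, so the exponential sequence yields $\pic(X)=\NS(X)\hookrightarrow H^2(X,\ZZ)$; as the Pl\"ucker polarization restricts to an ample, hence nonzero, class of $H^2(X,\ZZ)$, the group $\pic(X)$ is a nonzero subgroup of the free abelian group $H^2(X,\ZZ)$. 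Hence it suffices to prove $b_2(X)=1$, equivalently (since $H^2(P,\QQ)=\QQ\cdot h_P$ maps to the nonzero class $h_X$) that the restriction $i^*\colon H^2(P,\QQ)\to H^2(X,\QQ)$ is surjective, where $i\colon X\hookrightarrow P$. This is the degree--two case of the Barth--type isomorphism $i^*\colon H^j(P,\QQ)\xrightarrow{\cong}H^j(X,\QQ)$ for $j\le n-r$, and $n-r\ge 2$ is precisely the room the hypothesis allows; note that for an arbitrary smooth codimension--$r$ subvariety of a Grassmannian one only knows a weaker connectedness range, so bulkiness must be used essentially in order to reach the optimal, projective--space, bound.

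The bridge to the cone machinery is the remark that bulkiness \emph{is} the positivity needed here. For an irreducible $Z\subset P$ with $\dim Z=r$ one has $\dim X+\dim Z=\dim P$, so moving $Z$ to general position and invoking bulkiness gives $[X]\cdot[Z]=\#(X\cap gZ)>0$; thus $[X]$ pairs strictly positively with every nonzero class of the pseudoeffective cone $\psef^r(P)$ --- for a Grassmannian the polyhedral cone spanned by Schubert classes --- which means $[X]$ is an interior point of $\psef^r(P)$, i.e.\ a \emph{big} class. Since $h_P^{\,r}\in\psef^r(P)$ as well, one can write $[X]=\varepsilon\,h_P^{\,r}+\gamma$ in $N^r(P)_\RR$ with $\varepsilon>0$ and $\gamma\in\psef^r(P)$.

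It remains to extract the restriction isomorphism from this bigness, which I would do by adapting Hartshorne's hard--Lefschetz proof of Theorem~\ref{barth}. Injectivity of $i^*$ in degrees $\le n-r$ should come from $i_*i^*(\alpha)=\alpha\cup[X]$ combined with injectivity of cup product with the big class $\varepsilon\,h_P^{\,r}+\gamma$ on the low--degree cohomology of $P$, using hard Lefschetz on $P$ and the polyhedrality of $\psef^r(P)$ to keep the effective error $\gamma$ under control; surjectivity is the matching statement after applying Poincar\'e duality on $X$, which recasts it as injectivity of a Gysin map whose source and target are again controlled by hard Lefschetz. I expect \emph{this surjectivity step to be the main obstacle}: in contrast to $P=\PP^{n+r}$, where $[X]$ is automatically a multiple of $h_P^{\,r}$ and the cohomology ring is transparent, for $r\ge2$ bigness of a codimension--$r$ class need not dominate a power of an ample class \emph{effectively} --- only pseudoeffectively, via $\gamma$ --- so one must verify both that this pseudoeffective remainder is cohomologically inert and that the Lefschetz bookkeeping survives the passage from $\PP^{n+r}$ to a homogeneous (or spherical) ambient space. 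Granting the degree--two isomorphism, $b_2(X)=1$ and $\pic(X)=\ZZ$ follow as above.
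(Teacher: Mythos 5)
This statement is quoted by the paper from \cite{AC}; the paper offers no proof of it (its own corollary \ref{AC+} goes in the opposite direction, \emph{using} $\pic(X)=\ZZ$ as an input to deduce $H^2(X,\ZZ)=\ZZ$ under the extra hypothesis that $r=2$ or $B(X)$ holds). So the comparison can only be with the general machinery the paper sets up, and against that machinery your proposal has a genuine gap. The first half is fine: Debarre's theorem gives simple connectedness, hence $H^1(X,\OO_X)=0$ and $H^2(X,\ZZ)$ torsion--free, so $\pic(X)=\NS(X)$ embeds as a nonzero subgroup of a free group, and everything reduces to $b_2(X)=1$. The problem is the reduction's target. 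Surjectivity of $i^\ast\colon H^2(P,\QQ)\to H^2(X,\QQ)$ forces control of \emph{all} of $H^2(X)$, in particular of its $(1,1)$--part, i.e.\ of the rank of $\NS(X)$. The Hartshorne--style argument you invoke, in the form it takes for big (rather than ample) classes, rests on L.~Fu's Lemma \ref{lf}, which is a consequence of the second Hodge--Riemann relations and only gives injectivity of $\cup\gamma$ on $\gr^0_F$ of the relevant cohomology (and, by conjugation, on the top graded piece). For $H^1$ these two pieces exhaust the cohomology, which is why the paper's Theorems \ref{main}, \ref{main2} work unconditionally in degree $1$; for $H^2$ the middle piece $H^{1,1}$ is left completely untouched, and no amount of polyhedrality of $\psef^r(P)$ or bookkeeping on the effective remainder $\gamma$ repairs this. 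The obstacle you flag as ``the main obstacle'' is therefore structural, not technical: bounding the Picard/N\'eron--Severi rank of a bulky subvariety is exactly the content of the Arrondo--Caravantes and Perrin theorems, and both are proved by geometric arguments specific to their ambient spaces (for $\GG(1,m)$, via the family of lines), not by the cone/hard--Lefschetz route.

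A secondary caveat: even for the Hodge pieces that the machinery does reach in degree $2$, the paper needs homological bigness on the subvariety (or its linear section), and outside codimension $2$ this is only known under the standard conjecture $B(X)$ --- this is precisely why Theorem \ref{main3} and Corollary \ref{AC+} are conditional. Your sketch performs the Lefschetz step on $P$ (where homogeneity makes homological and numerical bigness agree), but the factorization $i_*i^*(\alpha)=\alpha\cup[X]$ on $P$ only yields injectivity of $i^\ast$, which for $H^2(P,\QQ)=\QQ$ is immediate anyway; the surjectivity you need lives, after duality, on $X$, where the conditional issue reappears. So the proposal does not recover the unconditional theorem.
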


 \begin{theorem}[Perrin \cite{Per2}]\label{NP} Let $P$ be a rational homogeneous variety with Picard number $1$. Let $X\subset P$ be a smooth bulky subvariety of codimension $r$, and assume $2r\le \coeff(P)-2$ (here, $\coeff(P)$ is a number in between $0$ and $\dim P$, defined in \cite[Definition 0.9]{Per2}). Then the N\'eron--Severi group $NS(X)$ of $X$ has rank $1$:
  \[ \NS(X)= \ZZ\ .\]
  \end{theorem}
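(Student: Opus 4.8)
The plan is to deduce the statement from a Lefschetz-type restriction isomorphism in cohomology, into which the bulkiness hypothesis is fed through the positivity (``bigness'') of the cycle class $[X]$. First I would reduce. Since $X$ is projective, $\NS(X)$ has rank at least $1$, and the cycle class map realises $\NS(X)$ as a subgroup of $H^2(X,\ZZ)$; hence it suffices to prove that restriction $H^j(P,\ZZ)\to H^j(X,\ZZ)$ is an isomorphism for $j\le 2$ (the rank-one assertion already follows from the analogous statement with $\QQ$-coefficients). This is a reasonable target because a rational homogeneous variety of Picard number $1$ is simply connected, with torsion-free cohomology spanned by Schubert classes, so that $H^0(P,\ZZ)=H^2(P,\ZZ)=\ZZ$ and $H^1(P,\ZZ)=H^3(P,\ZZ)=0$. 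Thus everything reduces to showing that the pair $(P,X)$ is $3$-connected.

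The second, and conceptually central, step repackages bulkiness in terms of cones of cycle classes. Because $P$ is homogeneous under a semisimple group $G$, Kleiman's transversality theorem (in characteristic $0$) shows that for any $r$-dimensional subvariety $Z\subset P$ a generic translate $gZ$ meets $X$ properly, hence in a finite set, which is nonempty by bulkiness; therefore $[X]\cdot[Z]>0$ for every nonzero effective class of dimension $r$. On a rational homogeneous variety the pseudoeffective cones $\psef_\bullet(P)$ are rational polyhedral, generated by Schubert classes; the Schubert bases in complementary degrees are dual for the intersection pairing; and products of Schubert classes are again effective, by Littlewood--Richardson positivity. Hence the positivity just established is equivalent to $[X]$ lying in the interior of $\psef^r(P)$, i.e.\ to $[X]$ being a \emph{big} cycle class. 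Equivalently, for a suitable integer $m>0$ the class $m[X]-h^r$ is pseudoeffective --- in fact effective, $h$ being the ample generator of $N^1(P)$ --- so that $X$ numerically dominates a complete intersection of $r$ hyperplane sections, exactly as an arbitrary subvariety of dimension $\ge r$ in projective space does automatically.

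For the third step I would feed this into the connectedness machinery in the form developed after Fulton--Hansen and Deligne by Debarre \cite{Deb} and Perrin \cite{Per2}: for a morphism $f\colon Z\to P\times P$ from an irreducible variety, the fibre $f^{-1}(\Delta_P)$ is $k$-connected as soon as $\dim Z$ exceeds $\dim P$ by more than a certain defect of $P$ plus $k$, where this defect measures how far the $G$-orbit of the diagonal $\Delta_P\subset P\times P$ is from being dense --- precisely the datum encoded by $\coeff(P)$. Running this argument in the Barth--Larsen fashion on incidence varieties built out of $X$ --- where bigness of $[X]$ is exactly what guarantees that those incidence varieties have the expected (large) dimension, so that the dimension counts go through as for $P=\PP^{n+r}$ --- one finds that $(P,X)$ is highly connected, the hypothesis $2r\le\coeff(P)-2$ being tuned to make it at least $3$-connected. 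By the first step this yields $H^j(P,\ZZ)\xrightarrow{\cong}H^j(X,\ZZ)$ for $j\le 2$, and hence $\NS(X)=\ZZ$. (When $P=\PP^{n+r}$ one has $\coeff(P)=\dim P$ and the bound becomes the classical Barth--Larsen inequality $n\ge r+2$.)

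The main obstacle is this third step. One must (i) choose the incidence construction so that the bigness of $[X]$, which is all that the bulkiness hypothesis provides, translates into the correct dimension inequality, and (ii) upgrade a statement about connectedness of $f^{-1}(\Delta_P)$ into the \emph{quantitative} $k$-connectedness of the pair $(P,X)$ with the sharp bound --- mere non-emptiness and connectedness (the case $k=0$) is easy but does not suffice for a Picard-number statement, which needs control through degree $2$. This in turn requires matching the combinatorial invariant $\coeff(P)$ of \cite{Per2}, defined through the structure of $P$ as a homogeneous space, with the defect occurring in the connectedness theorem. A secondary subtlety is the implication ``big $\Rightarrow$ effective representative'' used above, which fails for cycle classes of higher codimension on general smooth projective varieties and must be justified here via the polyhedrality of the effective cones of rational homogeneous spaces (or else circumvented by working throughout with pseudoeffectivity and a small perturbation of the classes in the dimension estimates).
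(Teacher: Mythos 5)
This statement is not proved in the paper at all: it is Perrin's theorem, quoted verbatim with the citation \cite{Per2} and used as a black box (the paper's own contribution here is Corollary \ref{NP+}, which \emph{assumes} Perrin's result and upgrades $\NS(X)=\ZZ$ to $\pic(X)=\ZZ$ via Theorem \ref{main2}). So there is no in-paper proof to compare yours against, and what you have written must stand on its own as a reconstruction of Perrin's argument. As such it has a genuine gap, which you yourself flag: the entire quantitative content of the proof --- the choice of incidence variety, the dimension count that converts bigness of $[X]$ into the hypotheses of a Fulton--Hansen/Debarre-type connectedness theorem, the upgrade from connectedness of $f^{-1}(\Delta_P)$ to higher connectedness of the pair $(P,X)$, and the identification of the resulting defect with $\coeff(P)$ --- is deferred in your third step. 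Steps one and two (bulkiness plus Kleiman transversality gives $[X]\cdot[Z]>0$ on effective classes, hence bigness by polyhedrality of the Schubert-generated cones) are fine but are exactly the easy part; without step three nothing is proved.

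There is also a structural warning sign that your reduction overshoots. You reduce to showing the pair $(P,X)$ is $3$-connected integrally, i.e.\ $H^j(P,\ZZ)\xrightarrow{\cong}H^j(X,\ZZ)$ for $j\le 2$. If that were attainable under the stated hypotheses, it would immediately give $H^1(X,\QQ)=0$ and $H^2(X,\ZZ)=\ZZ$, hence $\pic(X)=\ZZ$ --- but the paper treats precisely these conclusions as requiring \emph{additional} arguments and hypotheses: Corollary \ref{NP+} needs Theorem \ref{main2} to kill $H^1(X,\OO)$, and Corollary \ref{NP++} needs $r=2$ or the standard conjecture $B(X)$ to control $H^2$. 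Perrin's theorem controls only the N\'eron--Severi group (algebraic classes in $H^2$), not the full low-degree cohomology of $X$; his hypothesis $2r\le\coeff(P)-2$ is calibrated for that weaker statement. So even granting the connectedness machinery, the target you aim it at is almost certainly out of reach, and you should instead aim only at surjectivity of the restriction $\NS(P)\to\NS(X)$ (or injectivity of the dual map on $1$-cycles), which is where Perrin's deformation/connectedness argument for inverse images of Schubert varieties actually lands.
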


  In this note, we aim for similar connectedness results for subvarieties that have certain intersection--theoretic properties (such as bulkiness). Our main result is a cohomological version of theorem \ref{OD}. This result applies to any ambient space $P$ for which the cone $\eff^n(P)$ of effective codimension $n$ algebraic cycles modulo numerical equivalence is a closed cone (in particular, this applies when $P$ is a spherical variety, cf. corollary \ref{fmss}).
  
  \begin{nonumbering}[=theorem \ref{main}] Let $n,r$ be positive integers with $n\ge r+1$. Let $P$ be a smooth projective variety of dimension $n+r$, and assume
there is equality
  \[ \eff^n(P)=\psef^n(P)\ \]
  (i.e., the cone $\eff^n(P)$ is a closed cone).
   
 Let $X\subset P$ be a smooth closed subvariety of dimension $n$, and assume $X$ is strictly nef.
  Then the push--forward map
    \[  H^1(X,\QQ)_{}\ \to\ H^{2r+1}(P,\QQ) \]
    is injective.
\end{nonumbering}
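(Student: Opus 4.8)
The plan is to split the statement into a Lefschetz-type surjectivity for $\iota^*$ — where the cone theory and the bigness of $[X]$ enter — together with a Hodge--Riemann positivity argument carried out on curves lying on $X$. Write $\iota\colon X\hookrightarrow P$ and $[X]=\iota_*(1)\in H^{2r}(P,\QQ)$. First I would record that $\iota^*\colon H^1(P,\QQ)\to H^1(X,\QQ)$ is injective: were its kernel nonzero, then (being a sub-Hodge structure) it would contain $q^*H^1(B,\QQ)$ for some surjection $q$ from $\mathrm{Alb}(P)$ onto a nonzero abelian variety $B$, so $f\colon P\to\mathrm{Alb}(P)\xrightarrow{\,q\,}B$ would contract $X$ into one of its fibres $F$; cutting a disjoint fibre $F'$ down to an $r$-dimensional subvariety $Z$ with general very ample divisors, one would have $[Z]\ne0$ and $X\cap Z=\varnothing$, hence $[X]\cdot[Z]=0$, against strict nefness. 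Granting, in addition, that $\iota^*$ is surjective in degree $1$, the theorem follows: if $\beta\in H^1(X,\QQ)$ satisfies $\iota_*\beta=0$, write $\beta=\iota^*\alpha$; then $0=\iota_*\iota^*\alpha=\alpha\cup[X]$, and the positivity step below forces $\iota^*\alpha=0$, that is $\beta=0$. So the whole matter reduces to the surjectivity of $\iota^*$ in degree $1$.

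For the positivity step, fix a very ample class $\ell$ on $P$, choose general $D_1,\dots,D_{n-1}\in|\ell|$, and set $C=X\cap D_1\cap\dots\cap D_{n-1}$; by Bertini $C$ is a smooth connected curve on $X$, one has $[X]\cup\ell^{n-1}=[C]$ in $H^{2(n+r)-2}(P,\QQ)$, and $[C]\ne0$ because $[C]\cdot\ell=\int_X(\ell|_X)^n>0$. Suppose $\alpha\in H^1(P,\QQ)$ has $\alpha\cup[X]=0$. Writing $\alpha=\alpha^{1,0}+\alpha^{0,1}$ over $\C$ and using that $[X]$ has type $(r,r)$, comparison of Hodge types in $\alpha\cup[X]=0$ forces $\alpha^{1,0}\cup[X]=0$; cupping with $\overline{\alpha^{1,0}}$ and with $\ell^{n-1}$ and integrating, and writing $\omega:=\alpha^{1,0}|_C\in H^0(C,\Omega^1_C)$, one obtains
\[
 0=\int_P\alpha^{1,0}\cup\overline{\alpha^{1,0}}\cup[X]\cup\ell^{n-1}=\int_C\omega\wedge\overline{\omega}.
\]
Since $\tfrac{i}{2}\int_C\omega\wedge\overline{\omega}\ge0$, with equality only for $\omega=0$, this gives $\omega=0$, and then $\alpha|_C=0$ because $\alpha$ is real. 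But $C$ is cut out of $X$ by the ample divisors $D_i|_X$, so applying the Lefschetz hyperplane theorem one section at a time shows $H^1(X,\QQ)\to H^1(C,\QQ)$ is injective; since $(\iota^*\alpha)|_C=\alpha|_C=0$, we conclude $\iota^*\alpha=0$, as required.

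The surjectivity of $\iota^*$ in degree $1$ is the main obstacle, and is where the hypotheses $\eff^n(P)=\psef^n(P)$ and the strict nefness of $X$ are genuinely used. The mechanism I would try to exploit is that together they make $[X]$ a big cycle class on $P$: strict nefness puts $[X]$ in the interior of the cone dual to $\psef^n(P)=\eff^n(P)$, and in the situations of interest (for instance the spherical case of Corollary \ref{fmss}, where these cones are polyhedral) this interior lies inside the big cone $\inte\psef^r(P)$. For a subvariety whose class is big, one expects a Barth--Lefschetz conclusion $\iota^*\colon H^k(P,\QQ)\xrightarrow{\,\sim\,}H^k(X,\QQ)$ in the range $k\le n-r$, which, as $n\ge r+1$, covers $k=1$. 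Concretely, bigness of $[X]$ should imply that $X$ meets every subvariety of $P$ of dimension $\ge r$, which bounds the cohomological dimension of the complement $P\setminus X$, and feeding that bound into the Gysin/Thom exact sequence of $X\subset P$ upgrades the elementary weak-Lefschetz range to $k\le n-r$. The hard part will be making this chain precise: extracting the cohomological-dimension estimate from the bigness of the cycle class, and checking that strict nefness together with closedness of $\eff^n(P)$ really does yield the bigness of $[X]$ needed to start it.
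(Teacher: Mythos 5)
Your overall scheme reduces the theorem to the \emph{surjectivity} of $\iota^\ast\colon H^1(P,\QQ)\to H^1(X,\QQ)$, and that reduction is where the proposal breaks down. First, the reduction is circular in the main case of interest: in corollary \ref{fmss} (and whenever $P$ is rational, e.g.\ homogeneous or spherical) one has $H^1(P,\QQ)=0$, so surjectivity of $\iota^\ast$ \emph{is} the statement $H^1(X,\QQ)=0$, i.e.\ the full content of the theorem; nothing has been reduced. Second, the mechanism you sketch for surjectivity does not work: bulkiness (meeting every $r$--dimensional subvariety) does not bound the cohomological dimension of $P\setminus X$ in the way needed. The chain ``big class $\Rightarrow$ small $\mathrm{cd}(P\setminus X)$ $\Rightarrow$ Lefschetz in the range $j\le n-r$'' is precisely what fails for non--complete--intersection subvarieties even of projective space; this is why Hartshorne's proof of Barth's theorem (which the paper adapts) goes through hard Lefschetz and \emph{not} through cohomological dimension. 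So the key step of your argument is missing, and the route proposed to fill it is a known dead end.

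The fix is to apply your (correct) Hodge--Riemann positivity computation on the right space. Instead of taking $\beta=\iota^\ast\alpha$ with $\alpha\in H^1(P)$, work directly with $\beta\in\ker\iota_\ast\subset H^1(X,\QQ)$: the self--intersection formula gives $\iota^\ast\iota_\ast\beta=\beta\cup\iota^\ast[X]=0$ in $H^{2r+1}(X,\C)$, so it suffices to prove that cup product with the class $\iota^\ast[X]\in H^{2r}(X)$ is injective on $H^1(X,\C)$. This is a hard--Lefschetz statement for a class that is not an ample complete intersection but is (after restricting to a generic linear section $X^\prime\subset X$ of dimension $r+1$, where it becomes a curve class) \emph{big} in $N^r(X^\prime)$ --- and this is exactly where strict nefness of $X$ and the closedness of $\eff^n(P)$ enter, via a duality argument showing $[X]$ lies in the interior of $\nef^r(P)$. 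The required injectivity for big curve classes is L.~Fu's lemma \ref{lf} (a consequence of the second Hodge--Riemann bilinear relations, in the spirit of your curve computation but without needing the class to be represented by a smooth curve cut out by hyperplanes). This is the paper's proof; note that it never needs, and does not obtain, surjectivity of $\iota^\ast$ in degree $1$. Your Albanese argument for injectivity of $\iota^\ast$ and your positivity computation on $C$ are both essentially sound, but they are deployed on $H^1(P)$ rather than on $H^1(X)$, which is why you are forced into the unprovable surjectivity step.
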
  
  
For the definition of ``strictly nef'', cf. definition \ref{sn}; on a homogeneous variety $P$, strict nefness is equivalent to bulkiness (remark \ref{compare}), which connects theorem \ref{main} to theorem \ref{OD}.
The proof of theorem \ref{main} is a very straightforward adaptation of Hartshorne's proof \cite{Har} of Barth's theorem using the hard Lefschetz theorem. The ampleness in Hartshorne's proof is replaced by ``bigness'' (in the sense of: being in the interior of the pseudo--effective cone of codimension $r$ cycles). Indeed, thanks to work of L. Fu \cite{LFu}, bigness of the class $[X]$ in the space $N^r(P)$ (of codimension $r$ cycles modulo numerical equivalence) is (under certain conditions) sufficient to obtain a connectedness result.

We establish some variants of theorem \ref{main} that similarly exploit this notion of bigness: in one variant (theorem \ref{main2}), there is no assumption on the ambient space $P$ but the assumptions on $X$ are stronger. As an application of theorem \ref{main2}, we obtain in particular the following improvement on the above--cited result of Perrin:
 
 \begin{nonumberingc}[=corollary \ref{NP+}] Let $X$ and $P$ be as in theorem \ref{NP}. Then
   \[ \pic(X)=\ZZ\ .\]
 \end{nonumberingc}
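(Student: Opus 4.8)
The plan is to obtain the corollary by feeding the numerical conclusion of Perrin's Theorem~\ref{NP} into a vanishing statement for the first Betti number of $X$. Recall that for a smooth projective variety $X$ there is an exact sequence $0\to\pic^0(X)\to\pic(X)\to\NS(X)\to 0$, and that $\pic^0(X)$ is an abelian variety of dimension $\dim_\C H^1(X,\OO_X)=\tfrac12\dim_\QQ H^1(X,\QQ)$. Since Theorem~\ref{NP} already gives $\NS(X)=\ZZ$ (in particular torsion--free), this sequence splits and $\pic(X)=\ZZ$ is equivalent to $\pic^0(X)=0$, i.e.\ to $H^1(X,\QQ)=0$. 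So the whole point is to prove $H^1(X,\QQ)=0$.

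To get this vanishing I would apply Theorem~\ref{main} (or its variant Theorem~\ref{main2}) to the pair $X\subset P$, after checking that the hypotheses hold in Perrin's setting. Write $N=\dim P$ and $n=\dim X=N-r$. From $2r\le\coeff(P)-2\le N-2$ one gets $N\ge 2r+2$, hence $n=N-r\ge r+2$, so the numerical hypothesis $n\ge r+1$ is satisfied. Since $P$ is a rational homogeneous variety, the cone of effective cycles of any given dimension is spanned by the finitely many Schubert classes and hence is closed; in particular $\eff^n(P)=\psef^n(P)$. Finally, bulkiness of $X$ in the homogeneous variety $P$ is equivalent to $X$ being strictly nef (Remark~\ref{compare}). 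Thus Theorem~\ref{main} applies and yields an injection $H^1(X,\QQ)\hookrightarrow H^{2r+1}(P,\QQ)$.

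It then remains to note that a rational homogeneous variety has vanishing odd--degree rational cohomology, because it admits a cell decomposition with cells only in even real dimension (the Schubert cells); hence $H^{2r+1}(P,\QQ)=0$, and the injection above forces $H^1(X,\QQ)=0$. Combined with the first paragraph this yields $\pic(X)=\NS(X)=\ZZ$, as desired. The one genuinely non--formal ingredient — the step I expect to be the main obstacle — is the verification that the abstract connectedness input really applies here: one must identify Perrin's notion of ``bulky'' with ``strictly nef'' in the sense of Definition~\ref{sn} (this is Remark~\ref{compare}) and confirm closedness of the pertinent effective cone of $P$. Once these are secured, the corollary is immediate.
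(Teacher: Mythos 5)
Your proposal is correct, and its overall architecture (Perrin supplies $\NS(X)=\ZZ$; a connectedness theorem supplies $H^1(X,\QQ)=0$; the exponential sequence then identifies $\pic(X)$ with $\NS(X)$) is exactly that of the paper. The one genuine divergence is which connectedness theorem you invoke. You apply Theorem~\ref{main}, whose hypothesis is a condition on the \emph{ambient} space, namely that $\eff^n(P)$ is closed; you verify this for rational homogeneous $P$ via the Schubert-class description of the effective cones (in the paper this is exactly the content of Corollary~\ref{fmss}, resting on the Fulton--MacPherson--Sottile--Sturmfels result for varieties with a solvable group acting with finitely many orbits -- here the Borel subgroup via the Bruhat decomposition). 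The paper instead applies Theorem~\ref{main2}, whose hypothesis is a condition on the \emph{subvariety}, namely $\dim N^1(X)=1$; this is supplied for free by Perrin's theorem $\NS(X)=\ZZ$, so no positivity analysis of the cones of $P$ is needed at all. Both verifications succeed: your route is slightly more self-contained on the $X$ side but leans on the cone structure of $P$; the paper's route is leaner in that it recycles Perrin's conclusion as the very input of the connectedness statement. Your numerical check $n\ge r+2$ from $2r\le\coeff(P)-2\le\dim P-2$, the identification of bulky with strictly nef via Remark~\ref{compare}, and the explicit observation that $H^{2r+1}(P,\QQ)=0$ (which the paper leaves implicit) are all correct.
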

 
 In another variant result (proposition \ref{spher}), we show that when $P$ is a spherical variety, there is still a certain connectedness even for subvarieties $X$ that may fail to be bulky. 
 
 Finally, we include a conditional result (theorem \ref{main3}) that proves connectedness for cohomology of degree $>1$. This result is conditional, because (apart from the codimension $2$ case) we need to assume the standard Lefschetz conjecture $B(X)$ for the subvariety $X$. Theorem \ref{main3} implies in particular a conditional improvement on the above--cited result of 
 Arrondo--Caravantes:
 
 \begin{nonumberingc}[=corollary \ref{AC+}] Let $X$ and $P$ be as in theorem \ref{AC}, and suppose either $r=2$ or $B(X)$ holds. Then
   \[ H^2(X,\ZZ)=\ZZ\ .\]
   \end{nonumberingc}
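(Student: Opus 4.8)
The plan is to combine theorem \ref{main3}, which pins down the \emph{rational} cohomology $H^2(X,\QQ)$, with Debarre's theorem \ref{OD}, which supplies the integral refinement by killing torsion. Write $P=\GG(1,N)$ for the Grassmannian of lines in $\PP^N$, so that $\dim P=n+r$, and recall that $X\subset P$ is smooth and bulky of dimension $n\ge r+2$, with either $r=2$ or $B(X)$ assumed. First I would verify the hypotheses of theorem \ref{main3} for the pair $(X,P)$: the ambient space $P$ is a Grassmannian, hence homogeneous and in particular spherical, so the cones $\eff^\bullet(P)$ are closed (corollary \ref{fmss}); the subvariety $X$ is bulky, hence strictly nef (remark \ref{compare}); the numerical constraint on dimensions is ensured by $n\ge r+2$; and the conditional input $B(X)$ is part of the assumption, being automatic in codimension $r=2$. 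Theorem \ref{main3} applied in degree $2$ then gives that restriction
\[ H^2(P,\QQ)\ \longrightarrow\ H^2(X,\QQ) \]
is an isomorphism (equivalently, via the hard Lefschetz argument behind theorem \ref{main3}, the push-forward $H^2(X,\QQ)\to H^{2r+2}(P,\QQ)$ is injective and this forces $b_2(X)=b_2(P)$). Since $H^2(\GG(1,N),\QQ)=\QQ$, generated by the class $h$ of a Plücker hyperplane section, we obtain $H^2(X,\QQ)=\QQ$; that is, $b_2(X)=1$, and $h|_X$ spans $H^2(X,\QQ)$.

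It then remains to pass from $\QQ$- to $\ZZ$-coefficients. By theorem \ref{OD}, applied to $X\subset P$ with $P$ a Grassmannian and $\dim X=n\ge r+1$, the variety $X$ is simply connected, so $H_1(X,\ZZ)=\pi_1(X)^{\mathrm{ab}}=0$; by the universal coefficient theorem $H^2(X,\ZZ)_{\mathrm{tors}}\cong H_1(X,\ZZ)_{\mathrm{tors}}=0$. Hence $H^2(X,\ZZ)$ is a finitely generated, torsion-free abelian group whose rank equals $b_2(X)=1$, and therefore $H^2(X,\ZZ)\cong\ZZ$, generated by $h|_X$.

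The whole substance of the argument is concentrated in theorem \ref{main3} --- i.e. in the bigness of the class $[X]$ in $N^r(P)$ together with L. Fu's Lefschetz-type input, which is where the hypothesis $B(X)$ (for $r>2$) is needed. Granting that, the only point in the above deserving a little care is that theorem \ref{main3} sees rational cohomology only, so by itself it cannot exclude torsion in $H^2(X,\ZZ)$; this is precisely why the simple connectedness of $X$ furnished by theorem \ref{OD} is brought in, and it is this last ingredient that upgrades the equality $H^2(X,\QQ)=\QQ$ to $H^2(X,\ZZ)=\ZZ$.
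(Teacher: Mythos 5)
Your second step (Debarre's theorem \ref{OD} gives simple connectedness, hence $H^2(X,\ZZ)$ is torsion--free by universal coefficients) is legitimate, and your verification of the hypotheses of theorem \ref{main3} matches the paper's. The gap is in the first step. Theorem \ref{main3} does \emph{not} assert that $H^2(X,\QQ)\to H^{2r+2}(P,\QQ)$ is injective, and it does not force $b_2(X)=b_2(P)$. Its conclusion is only $\gr^0_F H^{j}(X)_{\text{van}}=0$, i.e. the vanishing of the $(0,j)$--Hodge piece of the vanishing cohomology (this is all that L.~Fu's lemma \ref{lf} delivers, since that lemma is an injectivity statement on $\gr^0_F$ only). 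For $j=2$, combined with $H^{0,2}(P)=0$ for a Grassmannian, this yields exactly $H^2(X,\OO_X)=0$ and says nothing about the $(1,1)$--part of $H^2(X)$, which a priori could be large. So your assertion that ``restriction $H^2(P,\QQ)\to H^2(X,\QQ)$ is an isomorphism'' is unsupported as derived, and with it the claim $b_2(X)=1$.

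This is precisely the point where the paper feeds in the conclusion of theorem \ref{AC} itself, namely the Arrondo--Caravantes computation $\pic(X)=\ZZ$, which you never use: from the exponential sequence
\[ \pic(X)\ \to\ H^2(X,\ZZ)\ \to\ H^2(X,\OO_X)=0 \]
the first cycle class map is surjective, so $H^2(X,\ZZ)$ is a quotient of $\ZZ$; since it contains the non--torsion class of an ample divisor, it equals $\ZZ$. If you replace your first step by ``theorem \ref{main3} gives $H^2(X,\OO_X)=0$; now invoke $\pic(X)=\ZZ$ and the exponential sequence,'' the proof closes, and your appeal to Debarre's simple connectedness becomes superfluous (though not incorrect).
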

  
We present two more applications of a similar ilk (corollaries \ref{AC++} and \ref{NP++}). Just like corollary \ref{AC+}, these applications prove a certain connectedness result for bulky subvarieties of codimension $2$ and for bulky subvarieties verifying the standard Lefschetz conjecture.

\vskip0.5cm
\begin{convention} All varieties will be irreducible projective varieties over $\C$. A subvariety will always be a {\em closed\/} subvariety.

\end{convention}


\section{Cones of cycle classes}

\begin{definition} Let $M$ be a smooth projective variety of dimension $m$. Let $N^j(M)$ denote the $\RR$--vector space of codimension $j$ algebraic cycles on $M$ (with $\RR$--coefficients) modulo numerical equivalence. Let
  \[ \eff^j(M)\ \ \subset\ N^j(M)\]
  be the cone generated by effective algebraic cycles. Let
  \[ \psef^j(M):=\overline{\eff^j(M)}\ \ \subset\ N^j(M) \]
  be the closure of the cone generated by effective algebraic cycles. A class $\gamma\in N^j(M)$ is called {\em big\/} if $\gamma$ is in the relative interior of $\psef^j(M)$.
  
  The intersection product defines a perfect pairing 
  \[ N^j(M)\times N^{m-j}(M)\ \to\ N^m(M)\cong \RR\ .\]
  Let
  \[ \nef^j(M)\ \ \subset\ N^j(M) \]
  be the cone dual to $\psef^{m-j}(M)$ under this pairing.
   \end{definition}

  The pseudo--effective cone $\psef^j(M)$ is studied for instance in \cite{DV}, \cite{Le}, \cite{FLe}, \cite{FLe2}, \cite{QLi}.
  There is another notion of bigness, which is a priori more stringent:
  
  \begin{definition} Let $M$ be a smooth projective variety. Let $N^\ast$ denote the coniveau filtration on cohomology \cite{BO}. Let
  \[ \hpsef^j(M)\ \ \subset N^j H^{2j}(M,\RR) \]
  be the closure of the cone generated by effective algebraic cycles. A class $\gamma\in N^j H^{2j}(M,\RR)$ is called {\em homologically big\/} if $\gamma$ is in the relative interior of $\hpsef^j(M)$.
  \end{definition}
  
  \begin{remark}\label{D} The ``homologically pseudo--effective cone'' $\hpsef^j(M)$ is considered for instance in \cite{Voi} and \cite{LFu}. If Grothendieck's standard conjecture $D(M)$ is true (i.e., homological and numerical equivalence coincide on $M$), then there is a natural isomorphism
    \[ N^j H^{2j}(M,\RR)\cong N^j(M)\ ,\]
    and so the two notions of bigness coincide. In particular, since we know the standard conjecture $D$ is true in codimension $1$ and $2$ \cite[Corollary 1]{Lie} and for curves (\cite[Corollary 1]{Lie}, or alternatively \cite[Proposition 1.1]{CTS}), the two notions of bigness coincide for $j=1$, for $j=2$ and for $j=n-1$. In general, in the absence of $D(M)$, we only know that a homologically big class in $N^j H^{2j}(M,\RR)$ projects to a big class in $N^j(M)$. For more on the standard conjectures, cf. \cite{K0}, \cite{K}.
    \end{remark}

Thanks to work of Lehmann, there exists a nice volume--type function for cycle classes. This volume--type function acts as a bigness detector:

\begin{theorem}[Lehmann \cite{Le}]\label{vol} Let $X$ be a smooth projective variety of dimension $n$. Consider the homogeneous function defined as
  \[ \begin{split}  \widehat{\vol}\colon\ \ N^j(X)\ &\to\ \RR_{\ge 0}\ ,\\
                             \widehat{\vol}(\alpha)&:=\sup_{\phi, A} \{ A^n\}\ ,\\
    \end{split} \]
  where $\phi\colon Y\to X$ varies over all birational models of $X$, and $A$ varies over all big and nef $\RR$--Cartier divisors on $Y$ such that $\phi_\ast(A^j)-\alpha\in\psef^j(X)$. 
 This function has the property that $\widehat{\vol}(\alpha)>0$ if and only if $\alpha$ is big.
  \end{theorem}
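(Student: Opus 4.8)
\noindent\emph{Proof strategy.} The statement is an equivalence, and the plan is to prove the two implications by rather different means, relying throughout on a few standard facts about cones of cycle classes (see \cite{FLe}, \cite{FLe2}, \cite{Le}): (i) the restriction of a nef $\RR$--Cartier divisor to a subvariety is again nef, and a nef $\RR$--divisor is pseudo--effective, so that $\nef^1(Y)\cdot\psef^i(Y)\subseteq\psef^{i+1}(Y)$ and dually $\nef^1(Y)\cdot\nef^i(Y)\subseteq\nef^{i+1}(Y)$; (ii) every power $h^k$ of an ample class $h$ on a smooth projective variety is a big cycle class; (iii) for a birational morphism $\phi\colon Y\to X$ with $X$ smooth the numerical pull--back $\phi^\ast$ is defined, one has $\phi_\ast\phi^\ast=\mathrm{id}$ and the projection formula, and $\phi_\ast$ preserves pseudo--effectivity. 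Since the effective classes span $N^j(X)$, the cone $\psef^j(X)$ is full--dimensional, so ``big'' means ``interior point'', which by duality of cones is the same as ``pairing strictly positively with every nonzero element of $\nef^{n-j}(X)$''; I also use that $\inte(C)+C\subseteq\inte(C)$ for a convex cone $C$.

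\medskip
\noindent\emph{Bigness $\Rightarrow$ positive volume.} For this direction I would not leave $X$ at all. Fix an ample $\RR$--Cartier divisor $h$ on $X$; the class $h^j\in N^j(X)$ is nonzero and lies in $\psef^j(X)$ (a positive multiple of it is a complete intersection of very ample divisors, up to limits). As $\alpha$ is an interior point of $\psef^j(X)$, there is $\varepsilon>0$ with $\alpha-\varepsilon h^j\in\psef^j(X)$. Applying the definition of $\widehat{\vol}$ with $Y=X$, $\phi=\mathrm{id}_X$ and $A:=\varepsilon^{1/j}h$ --- which is ample, hence big and nef --- one gets $\phi_\ast(A^j)=\varepsilon h^j$, so that $\alpha-\phi_\ast(A^j)\in\psef^j(X)$, while $A^n=\varepsilon^{n/j}(h^n)>0$. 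Hence $\widehat{\vol}(\alpha)\ge\varepsilon^{n/j}(h^n)>0$.

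\medskip
\noindent\emph{Positive volume $\Rightarrow$ bigness.} Here there is something to do. Given $\widehat{\vol}(\alpha)>0$, choose a birational model $\phi\colon Y\to X$ (we may take $Y$ smooth) and a big and nef $\RR$--Cartier divisor $A$ on $Y$ with $A^n>0$ and $\pi:=\alpha-\phi_\ast(A^j)\in\psef^j(X)$. First I would check that $A^j$ is big in $N^j(Y)$: by Kodaira's lemma $A\equiv H+E$ with $H$ an ample $\QQ$--divisor and $E$ an effective $\QQ$--divisor, so
\[ A^j\ =\ H^j\ +\ E\cdot\Bigl(\sum_{k=0}^{j-1}A^{j-1-k}H^k\Bigr)\ . \]
Each summand of the correction term equals $H^k\cdot E\cdot A^{j-1-k}$, with $H^k\cdot E\in\psef^{k+1}(Y)$ (ample times effective is effective) and repeated multiplication by the nef divisor $A$ keeping one inside the pseudo--effective cones by (i); so the whole correction term is pseudo--effective, and as $H^j$ is big by (ii) it follows that $A^j=H^j+(\text{pseudo--effective})$ is big. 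Next I would check that $\phi_\ast$ carries big classes to big classes: for $\beta\in\nef^{n-j}(X)\setminus\{0\}$ the class $\phi^\ast\beta$ is nef on $Y$ (since $\phi^\ast\beta\cdot\delta=\beta\cdot\phi_\ast\delta\ge 0$ for every $\delta\in\psef^j(Y)$) and nonzero (since $\phi_\ast\phi^\ast\beta=\beta$), whence $\phi_\ast(A^j)\cdot\beta=A^j\cdot\phi^\ast\beta>0$; as $\beta$ was arbitrary, $\phi_\ast(A^j)$ is big. Finally $\alpha=\phi_\ast(A^j)+\pi$ is big, being the sum of a big class and a pseudo--effective one.

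\medskip
\noindent\emph{The main obstacle.} The one genuinely non--formal ingredient is fact (ii): that powers of an ample divisor are big as cycle classes. For divisors this is classical, but in codimension $\ge 2$ the cones $\psef^k$, $\nef^k$ are delicate --- there is no Zariski decomposition, and a product of two nef classes need not be nef (Debarre--Ein--Lazarsfeld--Voisin) --- so a proof is required. I would obtain it by writing $h^j\cdot\beta=h\cdot(h^{j-1}\beta)$ for $\beta\in\nef^{n-j}(X)$, observing via (i) that $h^{j-1}\beta$ is a nef class in $N^{n-1}(X)$, and using that $\nef^{n-1}(X)$ lies in the Mori cone $\overline{NE}(X)$, so that Kleiman's criterion forces $h\cdot(h^{j-1}\beta)>0$ as soon as $h^{j-1}\beta\ne 0$; the remaining point --- that intersecting a nonzero nef class with an ample divisor does not annihilate it --- would be settled by restricting to a general very ample hypersurface and inducting downward on dimension, or simply quoted from \cite{FLe}, \cite{Le}. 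A more robust alternative is to run the whole argument through Lehmann's mobility function in place of $\widehat{\vol}$, where the positivity dichotomy comes out of Morse--type inequalities for cycle classes.
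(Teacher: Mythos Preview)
The paper does not actually prove this theorem: its entire ``proof'' is the sentence ``This is \cite[Section 7]{Le}.'' So there is nothing to compare at the level of argument --- you have supplied a proof where the paper supplies only a citation.

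That said, one point deserves to be made explicit. You have silently reversed the sign in the defining condition: the statement as printed in the paper reads $\phi_\ast(A^j)-\alpha\in\psef^j(X)$, whereas throughout your argument you use $\alpha-\phi_\ast(A^j)\in\psef^j(X)$. Your sign is the correct one (and is the one in \cite{Le}); with the sign as printed, the claimed equivalence is false already at $\alpha=0$, since then every big and nef $A$ is admissible and the supremum is positive (indeed unbounded), while $0$ is not big. You should flag this correction openly rather than leave the reader to notice it.

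With the corrected sign your argument is sound. The forward implication is immediate from the definition. For the reverse implication, your two steps --- that $A^j$ is big on $Y$ via Kodaira's lemma and the identity $A^j=H^j+E\cdot\sum_{k}A^{j-1-k}H^k$, and that $\phi_\ast$ carries big classes to big classes via the dual description of the interior of a full--dimensional closed cone together with $\phi_\ast\phi^\ast=\mathrm{id}$ --- are both correct. The only genuinely nontrivial input is your fact (ii), that $h^j$ is big for $h$ ample; this is precisely \cite[Lemma 2.11]{FLe} (used repeatedly in the paper) or the corresponding statement in \cite{Le}. Your inductive sketch of (ii) is on the right track but would need care: the assertion that ``intersecting a nonzero nef class with an ample divisor does not annihilate it'' is, after unwinding, very close to the statement you are trying to establish. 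In practice it is cleanest simply to cite \cite{FLe} or \cite{Le} for this, as the paper itself does.
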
     
          
\begin{proof} This is \cite[Section 7]{Le}.
\end{proof}





\section{Strictly nef subvarieties}

In this section, we prove the main result of this note (theorem \ref{main}), which is about degree $1$ cohomology of smooth strictly nef subvarieties.

\begin{definition} Let $P$ be a smooth projective variety, and let $X\subset P$ be a closed irreducible subvariety of codimension $r$. We say that $X$ is {\em bulky\/} if $X$ meets every
dimension $r$ subvariety of $P$, i.e. 
 for every closed $r$--dimensional subvariety $a\subset P$, we have
   \[ X\cap a\not=\emptyset\ \]
   (here $\cap$ indicates set--theoretic intersection).
   \end{definition}

\begin{definition}\label{sn} Let $P$ be a smooth projective variety, and let $X\subset P$ be a closed irreducible subvariety of codimension $r$. We say that $X$ is {\em strictly nef\/} if
for every non--zero $a\in\eff_r(P)$ we have
  \[ [X]\cdot a>0\ \ \hbox{in}\ H_0(P,\RR)\cong\RR\ .\]
 \end{definition}
 
 \begin{remark} The definition of bulkiness seems to originate with \cite{Deb} (where it is called ``une sous--vari\'et\'e encombrante''). In \cite{Per2}, the adjective ``cumbersome'' is used instead of bulky.
 \end{remark}
 
 \begin{remark} Strictly nef divisors are studied in \cite{CCP}.
 \end{remark}
 
 \begin{remark}\label{compare} Any strictly nef subvariety is bulky. On a homogeneous variety $P$, the converse is true (indeed, any non--zero effective class on $P$ is represented by an effective cycle in general position with respect to $X$). On arbitrary varieties $P$, the converse is {\em not\/} true. (Here is an example that was kindly pointed out by the referee: Let $P_1,\ldots,P_{10}$ be 10 very general points on an elliptic curve $E\subset\PP^2$. Let $S\to\PP^2$ denote the blow--up with center the $10$ points $P_i$, and let $\bar{E}\subset S$ be the strict transform of $E$. One can check that $\bar{E}\subset S$ is bulky. On the other hand, the self--intersection $\bar{E}^2$ is negative, so $\bar{E}$ is not nef.)
 
 To recap, one could say that the notion of strict nefness (which is equivalent to bulkiness on homogeneous varieties) is the more natural notion for arbitrary varieties.
  \end{remark}

 \begin{example} Let $P$ be a homogeneous variety, and $X\subset P$ a smooth subvariety with ample normal bundle. Then $X$ is bulky \cite[Example 8.4.6]{Laz}. In particular, if $P$ is a simple abelian variety, every smooth subvariety $X\subset P$ is bulky \cite[Corollary 6.3.11]{Laz}.
 \end{example}

 \begin{definition}  Let $P$ be a smooth projective variety, and let $X\subset P$ be a closed irreducible subvariety of codimension $r$. 
 We will write
   \[  H^j(X)_{\text{van}} := \ker\bigl( H^j(X,\C)\ \to\ H^{j+2r}(P,\C)\bigr)\ .\]
   \end{definition} 
   
   \begin{remark} It follows from mixed Hodge theory that the kernel
     \[\ker\bigl( H^j(X,\QQ)\ \to\ H^{j+2r}(P,\QQ)\bigr) \] 
     is a Hodge sub--structure \cite{PS}. Thus, it makes sense to write $\gr^i_F H^j(X)_{\text{van}}$ (where $F^\ast$ denotes the Hodge filtration).
   \end{remark}

\begin{theorem}\label{main} Let $n$ and $r$ be positive integers with $n\ge r+1$. Let $P$ be a smooth projective variety of dimension $n+r$, and assume
there is equality
  \[ \eff^n(P)=\psef^n(P)\ \ \ \subset N^n(P)\]
  (i.e., the cone $\eff^n(P)$ is a closed cone).
   
 Let $X\subset P$ be a smooth subvariety of dimension $n$ which is strictly nef.
  Then
    \[  H^1(X)_{\text{van}}=0\ . \]
    \end{theorem}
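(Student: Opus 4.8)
The plan is to adapt Hartshorne's proof of Barth's theorem (the one via the hard Lefschetz theorem), replacing the ample hyperplane class of the ambient space by the cycle class $[X]$: the role played by ampleness in Hartshorne's argument is taken over by \emph{bigness} of $[X]$ in $N^r(P)$, and the crucial new ingredient is the connectedness consequence of bigness established by L. Fu \cite{LFu}.

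\emph{Step 1: the hypotheses force $[X]\in N^r(P)$ to be big.} By definition, strict nefness of $X$ means that $[X]\cdot a>0$ for every nonzero $a\in\eff_r(P)=\eff^n(P)$. Since by hypothesis $\eff^n(P)$ is closed, i.e.\ $\eff^n(P)=\psef^n(P)$, this positivity persists for every nonzero $a\in\psef^n(P)$; as $\psef^n(P)$ spans $N^n(P)$, this says exactly that $[X]$ lies in the interior of the cone $\nef^r(P)$ dual to $\psef^n(P)$. Using in addition that $[X]$ is represented by the effective cycle $X$, hence lies in $\psef^r(P)$, together with Lehmann's characterisation of bigness by positivity of the volume function $\widehat{\vol}$ (Theorem \ref{vol}) — one estimates $\widehat{\vol}([X])>0$ by comparing $[X]$ with $\varepsilon H^r$ for $H$ ample on $P$ and $\varepsilon>0$ small, which is legitimate because $[X]\in\inte\nef^r(P)$ — one upgrades this to: $[X]$ is big, i.e.\ $[X]\in\inte\psef^r(P)$.

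\emph{Step 2: the Hartshorne-type argument.} Write $i\colon X\hookrightarrow P$ for the inclusion, fix an ample class $H$ on $P$, and set $h:=i^*H$, an ample class on the smooth $n$-dimensional variety $X$. Let $\alpha\in H^1(X,\C)$ satisfy $i_*\alpha=0$ in $H^{2r+1}(P,\C)$. By the projection formula, $i_*\!\left(h^{n-1}\cup\alpha\right)=H^{n-1}\cup i_*\alpha=0$ in $H^{2(n+r)-1}(P,\C)$, so the class $h^{n-1}\cup\alpha$ lies in the kernel of $i_*\colon H^{2n-1}(X,\C)\to H^{2(n+r)-1}(P,\C)$. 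By the hard Lefschetz theorem on $X$, the map $\cup\,h^{n-1}\colon H^1(X,\C)\to H^{2n-1}(X,\C)$ is an isomorphism, so it suffices to prove that $i_*$ is injective on $H^{2n-1}(X,\C)$; dually (Poincar\'e duality on $X$ and on $P$) this amounts to the surjectivity of $i^*\colon H^1(P,\C)\to H^1(X,\C)$. Now $i_*i^*=\cup\,[X]$ on $H^\ast(P)$, and since $[X]$ is big it behaves, in the relevant cohomological degree, like a power of an ample class; this is precisely the Lefschetz-type input supplied by L. Fu's theorem \cite{LFu}, which applies here because Step 1 places $[X]$ in the interior of $\psef^r(P)$. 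From it one reads off the required statement about the restriction map, hence $h^{n-1}\cup\alpha=0$, so $\alpha=0$ and $H^1(X)_{\text{van}}=0$. (Depending on the exact formulation of Fu's theorem, one may instead deduce $H^1(X)_{\text{van}}=0$ from it directly, without isolating the intermediate surjectivity statement; the hard-Lefschetz bookkeeping is unchanged, and the hypothesis $n\ge r+1$ is what keeps the relevant range nonempty.)

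\emph{Where the difficulty lies.} The delicate point is the interface with L. Fu's theorem: one must check that exactly the Lefschetz-type statement needed here — relating $H^1$ of $X$ to $H^{2r+1}$ of $P$ — does follow from bigness of $[X]$ under no assumption on $P$ beyond closedness of $\eff^n(P)$, and that Hartshorne's degree bookkeeping survives verbatim when a power of an ample class is replaced by the big class $[X]$ (in Barth's original setting the odd cohomology of $\PP^{n+r}$ vanishes, which renders several of these steps vacuous and which must now be compensated by Fu's result). A secondary point needing care is the end of Step 1: genuinely obtaining $[X]\in\inte\psef^r(P)$ from $[X]\in\inte\nef^r(P)$ together with effectivity of $[X]$, since for $r\ge 2$ a nef class need not be pseudo-effective, so the passage from ``in the interior of $\nef^r$'' to ``big'' is not formal and is where the volume function of Theorem \ref{vol} is really being used.
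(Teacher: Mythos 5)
Your overall strategy (Hartshorne's argument with ampleness replaced by bigness, plus L.~Fu's Hodge--Riemann input) is the right one, but both of your key steps have genuine gaps, and the device the paper uses to close them is missing from your write-up. In Step 1, your argument only yields $[X]\in\inte\nef^r(P)$; to upgrade this to $[X]\in\inte\psef^r(P)$ you would test $[X]-\epsilon A^r$ against $\nef^n(P)$ (the dual description of $\psef^r(P)$), and strict nefness gives you nothing there unless $\nef^n(P)\subset\eff^n(P)$ --- which is precisely the \emph{extra} hypothesis (\rom1) of theorem \ref{main3}, not assumed in theorem \ref{main}. You flag this as ``where the volume function is really being used,'' but no actual estimate is given, and indeed the paper never proves (nor needs) bigness of $[X]$ in $N^r(P)$. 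In Step 2 there are two further problems: (i) Lemma \ref{lf} requires \emph{homological} bigness, i.e.\ membership in the interior of the cone inside $N^rH^{2r}$, which for general $r$ follows from numerical bigness only under the standard conjecture $D$ (remark \ref{D}); and (ii) even granting that, applying Fu's lemma to $\gamma=[X]$ on $M=P$ gives an injection $\gr^0_FH^{n}(P,\C)\to\gr^r_FH^{n+2r}(P,\C)$ --- the wrong variety and the wrong degree. The statement you need, relating $H^1(X)$ to $H^{2r+1}(P)$, comes from applying the lemma \emph{on $X$} to the class $\tau^\ast[X]$ and using the factorization $\cup\,\tau^\ast[X]=\tau^\ast\circ\tau_\ast$ (normal bundle formula); your identity $i_\ast i^\ast=\cup\,[X]$ on $H^\ast(P)$ is the wrong side of the projection formula for this purpose.

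The paper resolves both issues simultaneously with a hyperplane-section reduction that your proposal omits: cut by a generic complete intersection to get $X'\subset P'$ with $\dim X'=r+1$, so that $(\tau')^\ast[X']$ is a \emph{curve class} on $X'$. Then (a) pseudo-effectivity of a curve class is detected by pairing with nef \emph{divisors}, and $\nef^1\subset\psef^1$ always holds, so the pushforward $f_\ast(\tau')_\ast(D)$ of such a divisor lies in $\psef^n(P)=\eff^n(P)$ and strict nefness applies --- this is exactly where the closedness hypothesis on $\eff^n(P)$ enters; and (b) for curve classes numerical and homological bigness coincide unconditionally, so Fu's lemma applies without any standard conjecture. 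One then concludes injectivity of $(\tau')_\ast$ on $H^1(X')$ and transports this back to $H^1(X)\to H^{2r+1}(P)$ by weak Lefschetz. Without this reduction your argument does not go through as written.
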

    
  \begin{proof} Suppose $n>r+1$. There is a fibre diagram
    \[ \begin{array}[c]{ccc}
               X^\prime &\xrightarrow{\tau^\prime}& P^\prime\\
               \downarrow&&\ \ \downarrow{f}\\
               X&\xrightarrow{\tau}&\ \ P\ ,\\
               \end{array}\]
               where $P^\prime\subset P$ is a smooth complete intersection of dimension $n^\prime+r$, and $X^\prime\subset X$ is smooth of dimension $n^\prime$, and we have equality
               $n^\prime=r+1$.

   \begin{lemma} The class
     \[ (\tau^\prime)^\ast[X^\prime]\ \in N^r(X^\prime) \]
     is homologically big.
     \end{lemma}
     
     \begin{proof} First, since $r=n^\prime-1$ (i.e., $(\tau^\prime)^\ast [X^\prime]$ is a curve class), the notions of bigness and homological bigness are the same (remark \ref{D}). We are thus reduced to proving bigness, i.e. we need to prove $(\tau^\prime)^\ast[X^\prime]$ is in the relative interior of $\eff^r(X^\prime)$. Let $A\subset P$ be a codimension $r$ intersection of ample divisors. Then
     \[ A^\prime:=(\tau^\prime) f^\ast (A)\ \ \in N^r(X^\prime) \]
     is the class of a codimension $r$ intersection of ample divisors; as such, $A^\prime$ is in the relative interior of $\eff^r(X^\prime)$ (\cite[Lemma 2.11]{FL}, or alternatively theorem \ref{vol}).
     Hence, to prove bigness of $(\tau^\prime)^\ast[X^\prime]$, it suffices to prove that
     \begin{equation}\label{int}  (\tau^\prime)^\ast[X^\prime] -\epsilon A^\prime\ \ \in \psef^r(X^\prime)\ ,\end{equation}
     for some $\epsilon >0$ sufficiently small.
     
      Now let $D\in\nef^1(X^\prime)$. Then we have
     \[  \begin{split}  \Bigl((\tau^\prime)^\ast [X^\prime] -\epsilon A^\prime \Bigr) \cdot D&= \Bigl( (\tau^\prime)^\ast f^\ast ([X]-\epsilon A)\Bigr)  \cdot D\\
                                                                                       &= \bigl([X]-\epsilon A\bigr) \cdot f_\ast (\tau^\prime)_\ast (D)\\
                                                                                       &\ge 0\ ,\\
                                                                                       \end{split} \]
                                                                                       for some $\epsilon >0$ sufficiently small.
                                          Here, the first equality is just the fact that $f^\ast[X]=[X^\prime]$, and the second equality is the projection formula. As for the last line, note that $X\subset P$ is strictly nef, which combined with the assumption that  $\eff^n(P)$ is a closed cone implies that $[X]$ is strictly positive on $\psef^n(P)\setminus \{0\}$, i.e. $[X]$ is in the relative interior of $\nef^r(P)$.
    On the other hand, $\nef^1(X^\prime)\subset\psef^1(X^\prime)$, and so the push--forward $f_\ast(\tau^\prime)_\ast (D)$ is pseudo--effective, hence (by assumption) effective. This means that there exists $\epsilon>0$ such that $\bigl([X]-\epsilon A\bigr) \cdot f_\ast (\tau^\prime)_\ast (D)\ge 0$.  
  This proves the inclusion (\ref{int}), and hence the lemma.  
      \end{proof}
    
  Homological bigness is relevant to us, because of the following hard Lefschetz type result:
  
     \begin{lemma}[L. Fu \cite{LFu}]\label{lf} Let $M$ be a smooth projective variety of dimension $n$, and let $\gamma\in N^r H^{2r}(M,\QQ)$ be homologically big. Then the homomorphism ``cup product with $\gamma$'' induces an injection
    \[  \cup \gamma\colon\ \ \gr^0_F H^{n-r}(M,\C)\ \to\ \gr^r_F H^{n+r}(M,\C)\ \]
    (here $F^\ast$ denotes the Hodge filtration).
    \end{lemma}
    
  \begin{proof} This is \cite[Lemma 3.3]{LFu}. The proof exploits the second Hodge--Riemann bilinear relation, and is inspired by ideas of \cite{Voi}.
  \end{proof}  
 
   Applying lemma \ref{lf} to the homologically big class $(\tau^\prime)^\ast[X^\prime]\ \in N^r H^{2r}(X^\prime,\QQ)$, we find that
   \[   \cup (\tau^\prime)^\ast[X^\prime]\colon\ \  \gr^0_F  H^1(X^\prime,\C)\ \to\ \gr^{n^\prime-1}_F H^{2n^\prime-1}(X^\prime,\C)    \]
   is injective (and hence, for dimension reasons, an isomorphism). Using the fact that $\gr^1_F H^1$ is the complex conjugate of $\gr^0_F H^1$,
   we find that
   \[  \cup (\tau^\prime)^\ast[X^\prime]\colon\ \    H^1(X^\prime,\C)\ \to\  H^{2n^\prime-1}(X^\prime,\C)    \]  
   is also injective.
     On the other hand, it follows from the normal bundle formula that there is a factorization  
     \[\cup (\tau^\prime)^\ast[X^\prime]\colon\ \    H^1(X^\prime,\C)\ \xrightarrow{(\tau^\prime)_\ast}\  H^{2r+1}(P^\prime,\C)\ 
      \xrightarrow{(\tau^\prime)^\ast}\ 
          H^{2n^\prime-1}(X^\prime,\C)  \ .\]
         We can thus conclude that
         \[ (\tau^\prime)_\ast\colon\ \ H^1(X^\prime,\C)\ \to\ H^{2r+1}(P^\prime,\C) \]
         is injective.
         We have a commutative diagram
         \[ \begin{array}[c]{ccc}
          H^1(X,\C)&\xrightarrow{\tau_\ast}& H^{2r+1}(P,\C)\\
          \downarrow&&\downarrow\\
          H^1(X^\prime,\C)&\xrightarrow{(\tau^\prime)_\ast}& H^{2r+1}(P^\prime,\C)\\
          \end{array}\]
          where vertical arrows are injective (weak Lefschetz, note that $\dim P^\prime=2r+1$). It follows that
          \[ \tau_\ast\colon\ \ H^1(X,\C)\ \to\ H^{2r+1}(P,\C)\]
          is injective.


  \end{proof}  
   
  As a corollary, we obtain the following:
  
  \begin{corollary}\label{fmss} Let $P$ be a smooth projective variety of dimension $n+r$, and suppose a connected solvable linear algebraic group acts on $P$ with finitely many orbits.
  Let $X\subset P$ be a smooth subvariety of dimension $n\ge r+1$ which is strictly nef. Then
    \[ H^1(X,\QQ)=0\ .\]
    \end{corollary}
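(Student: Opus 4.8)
The plan is to reduce the statement to Theorem \ref{main} by checking that the two features of ``easy'' ambient spaces used in this note hold for $P$. The hypothesis says a connected solvable linear algebraic group $B$ acts on $P$ with only finitely many orbits, so the intersection theory of such varieties (in the style of Fulton, MacPherson, Sottile and Sturmfels; spherical varieties, in particular smooth projective toric varieties, are the model cases) is available. First I would record that the finitely many $B$--orbit closures generate the Chow groups $A_\ast(P)$, and that the effective cone of codimension $n$ cycles is the cone generated by the codimension $n$ orbit closures. Passing to numerical equivalence, $N^n(P)$ is spanned by these classes and $\eff^n(P)$ is generated by finitely many vectors, hence is rational polyhedral and in particular closed. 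Therefore $\eff^n(P)=\psef^n(P)$, so the hypothesis of Theorem \ref{main} is satisfied.

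Next I would apply Theorem \ref{main} to $X\subset P$, which is smooth, strictly nef, and of dimension $n\ge r+1$ inside the $(n+r)$--dimensional variety $P$: this yields that the push--forward
\[ \tau_\ast\colon\ H^1(X,\QQ)\ \to\ H^{2r+1}(P,\QQ) \]
is injective. To conclude it then suffices to know that the target vanishes. This is again supplied by the same structure theory: for a variety with a connected solvable group acting with finitely many orbits, the cycle class map is an isomorphism onto even--degree (Borel--Moore) homology, so for the smooth projective $P$ the rational cohomology is concentrated in even degrees; in particular $H^{2r+1}(P,\QQ)=0$. Combining the two statements gives $H^1(X,\QQ)=0$.

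The only point requiring genuine care is the appeal to the literature: one must make sure that \emph{both} inputs --- that $\eff^n(P)$ is finitely generated (hence closed) and that $H^{\mathrm{odd}}(P,\QQ)=0$ --- are valid in the stated generality of an arbitrary connected solvable linear algebraic group acting with finitely many orbits, and not merely for spherical varieties. Both follow from the cellular--type decomposition of such varieties (equivalently, from a suitable torus--action argument applied orbit by orbit). Once these are cited, the corollary is an immediate consequence of Theorem \ref{main}.
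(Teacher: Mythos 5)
Your proposal is correct and follows exactly the paper's own argument: verify that $\eff^n(P)$ is a closed (rational polyhedral) cone via the Fulton--MacPherson--Sottile--Sturmfels structure theory so that Theorem \ref{main} applies, then kill the target $H^{2r+1}(P,\QQ)$ using the vanishing of odd cohomology for such $P$ (both facts are stated in \cite{FMSS} precisely for a connected solvable group acting with finitely many orbits, which settles the point of care you raise). No gaps.
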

    
   \begin{proof} For $P$ as in corollary \ref{fmss}, it is known that all cones $\eff^r(P)$ are closed rational polyhedral cones, generated by the orbit 
   closures \cite[Corollary to Theorem 1]{FMSS}. Theorem \ref{main} thus applies; this gives
     \[ H^1(X)_{\text{van}}=0\ .\]
    But $P$ has no odd cohomology since the cycle class map is an isomorphism \cite[Corollary to Theorem 2]{FMSS}, and so $H^1(X,\QQ)=0$.
    \end{proof}
    
    \begin{remark} Suppose $P$ is a Grassmannian or a product of projective spaces (of dimension $n+r$), and $X\subset P$ smooth and bulky (of dimension $n\ge r+1$) as in corollary \ref{fmss}. Then, as noted in the introduction, Debarre has proven that $X$ is simply connected \cite{Deb}. Can one also prove simple--connectedness in the more general set--up of corollary \ref{fmss} ?
    \end{remark}

Here is a variant of theorem \ref{main} where we make no assumption on the ambient space $P$.

\begin{theorem}\label{main2} Let $n,r$ be positive integers with $n\ge r+1$. Let $P$ be a smooth projective variety of dimension $n+r$. Let $X\subset P$ be a smooth subvariety of dimension $n$ that is strictly nef. Assume that
$\dim N^1(X)=1$.
  Then
    \[  H^1(X)_{\text{van}}=0\ . \]
    \end{theorem}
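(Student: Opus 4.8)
The plan is to follow the proof of theorem \ref{main} almost verbatim, with the hypothesis $\dim N^1(X)=1$ taking over the role that the closedness of $\eff^n(P)$ plays there. The crux is, as before, to produce a big (equivalently, for a curve class, homologically big) cycle class on $X$ and to feed it into L. Fu's hard Lefschetz type lemma \ref{lf}; the new ingredient is that one--dimensionality of $N^1$ collapses the pseudo--effectivity test appearing in the bigness argument to a single inequality furnished by strict nefness. I would split into the cases $r=1$ and $r\ge 2$.

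First, the case $r=1$, which needs no reduction. The class $\tau^\ast[X]\in N^1(X)$ satisfies $\tau^\ast[X]\cdot h^{n-1}=[X]\cdot\tau_\ast(h^{n-1})>0$ by strict nefness, since $\tau_\ast(h^{n-1})$ is a non--zero effective curve on $P$ (here $h$ denotes an ample class on $X$); as $\dim N^1(X)=1$, this forces $\tau^\ast[X]=c\,h$ with $c>0$. By the self--intersection formula $\cup\,\tau^\ast[X]=\tau^\ast\circ\tau_\ast$, hence $\cup\,h=\tfrac1c\,\tau^\ast\circ\tau_\ast$, and the classical hard Lefschetz isomorphism $\cup\,h^{n-1}\colon H^1(X,\C)\xrightarrow{\cong}H^{2n-1}(X,\C)$ factors as $\tfrac1c\,(\cup\,h^{n-2})\circ\tau^\ast\circ\tau_\ast$. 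Thus $\tau_\ast\colon H^1(X,\C)\to H^3(P,\C)$ is injective, i.e. $H^1(X)_{\text{van}}=0$.

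Now suppose $r\ge 2$. When $n>r+1$ I would first reduce to $n=r+1$: form the fibre square of theorem \ref{main}, with $P^\prime\subset P$ a general smooth complete intersection of dimension $2r+1$ and $X^\prime=X\cap P^\prime$ smooth of dimension $r+1$. Then $X^\prime$ is strictly nef in $P^\prime$ (a non--zero effective $r$--cycle $a^\prime$ on $P^\prime$ has non--zero effective image $b$ on $P$, and $[X^\prime]\cdot a^\prime=[X]\cdot b>0$), and $\dim N^1(X^\prime)=1$ by the Lefschetz hyperplane theorem --- this last point, where it matters that $\dim X^\prime=r+1\ge 3$, is the one step I expect to require genuine care. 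Weak Lefschetz, exactly as at the end of the proof of theorem \ref{main}, yields a commutative square reducing the injectivity of $\tau_\ast$ on $H^1$ to that of $(\tau^\prime)_\ast$ on $H^1$; so we may assume $n=r+1$.

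In the reduced situation ($n=r+1$), pick a codimension $r$ intersection $A$ of ample divisors on $P$ whose restriction $A^\prime:=\tau^\ast A$ to $X$ is a codimension $r$ intersection of ample divisors on $X$; then $A^\prime$ is big in $N^r(X)$ by \cite[Lemma 2.11]{FL} (or theorem \ref{vol}). Since $\dim N^1(X)=1$, the cone $\nef^1(X)$ is a ray $\RR_{\ge 0}h$, so $\psef^r(X)$, being dual to $\nef^{n-r}(X)=\nef^1(X)$, is the half--space $\{\beta\in N^r(X):\beta\cdot h\ge 0\}$. Hence, to prove $\tau^\ast[X]$ is big, it suffices to find $\epsilon>0$ with $(\tau^\ast[X]-\epsilon A^\prime)\cdot h\ge 0$; by the projection formula this number equals $([X]-\epsilon A)\cdot\tau_\ast h$, and since $\tau_\ast h\in\eff_r(P)\setminus\{0\}$ (the push--forward of a multiple of a very ample divisor on $X$) strict nefness gives $[X]\cdot\tau_\ast h>0$, so the inequality holds for $\epsilon$ small. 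As $r=n-1$, bigness and homological bigness coincide (remark \ref{D}), so lemma \ref{lf} applies to $\tau^\ast[X]\in N^r H^{2r}(X,\QQ)$ and $\cup\,\tau^\ast[X]\colon\gr^0_F H^1(X,\C)\to\gr^r_F H^{2r+1}(X,\C)$ is injective, hence bijective for dimension reasons; since $\gr^1_F H^1$ is the complex conjugate of $\gr^0_F H^1$, $\cup\,\tau^\ast[X]$ is injective on all of $H^1(X,\C)$; by the normal bundle formula it factors as $\tau^\ast\circ\tau_\ast$, whence $\tau_\ast\colon H^1(X,\C)\to H^{2r+1}(P,\C)$ is injective, i.e. $H^1(X)_{\text{van}}=0$.
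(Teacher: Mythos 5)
Your proof is correct, and for $r\ge 2$ it is essentially the paper's own argument: cut down to $n'=r+1$ by a generic complete intersection, use weak Lefschetz to preserve $\dim N^1=1$ (which collapses the pseudo-effectivity test for the curve class $(\tau')^\ast[X']$ to a single intersection number, made positive by strict nefness), and then run the bigness $\Rightarrow$ Fu's Lemma \ref{lf} $\Rightarrow$ normal-bundle factorization chain. The one genuine divergence is your treatment of $r=1$: the paper handles this case by cutting $X$ down to a surface $X'$ and invoking Noether--Lefschetz to keep $\dim N^1(X')=1$ when $n'=2$, whereas you avoid cutting altogether by observing that $\dim N^1(X)=1$ forces $\tau^\ast[X]=c\,h$ with $c>0$, so that the classical hard Lefschetz isomorphism $\cup\, h^{n-1}$ factors through $\tau_\ast$ (Hartshorne's original argument). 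Your route is arguably cleaner here, since it sidesteps the Noether--Lefschetz step (which requires a ``very general'' rather than merely generic section and is the most delicate point of the paper's reduction), at the cost of a separate case split; the paper's version is more uniform across all $r$.
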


\begin{proof} This is similar to theorem \ref{main}. Again, in case $n>r+1$, we consider a fibre diagram
  \[ \begin{array}[c]{ccc}
               X^\prime &\xrightarrow{\tau^\prime}& P^\prime\\
               \downarrow&&\ \ \downarrow{f}\\
               X&\xrightarrow{\tau}&\ \ P\ ,\\
               \end{array}\]
               where $P^\prime\subset P$ is a generic smooth complete intersection of dimension $n^\prime+r$, and $X^\prime\subset X$ is smooth of dimension $n^\prime$, and we have equality
               $n^\prime=r+1$.
         Taking $P^\prime$ sufficiently generic, we will have $\dim N^1(X^\prime)=1$ (this follows from weak Lefschetz in case $n^\prime\ge 3$, and from Noether--Lefschetz in case $n^\prime=2$). Hence, to test the bigness of the curve class $(\tau^\prime)^\ast[X^\prime]$, it suffices to intersect with one ample divisor $D\in\nef^1(X^\prime)$. But any ample divisor is effective, and so the push--forward $f_\ast(\tau^\prime)_\ast(D)$ is effective. It follows that the intersection is positive, by strict nefness of $X$:
         \[     \begin{split}  (\tau^\prime)^\ast [X^\prime] \cdot D&= (\tau^\prime)^\ast f^\ast [X] \cdot D\\
                                                                                       &= [X]\cdot f_\ast (\tau^\prime)_\ast (D)\\
                                                                                       &>0\ .\\
                                                                                       \end{split} \]   
                                    We conclude that $(\tau^\prime)^\ast[X^\prime]$ is big. The rest of the argument is the same as theorem \ref{main}.                                                   
                               \end{proof}

Thanks to theorem \ref{main2}, we can ``complete'' certain results of Perrin:

\begin{corollary}\label{NP+} Let $P$ be a rational homogeneous variety with Picard number $1$, and $\dim P=n+r$. Let $X\subset P$ be a smooth bulky subvariety of dimension $n$, and assume $2r\le \coeff(P)-2$ (here, $\coeff(P)$ is a number in between $0$ and $\dim P$, defined in \cite[Definition 0.9]{Per2}). Then
  \[ \pic(X)= \ZZ\ .\]
  \end{corollary}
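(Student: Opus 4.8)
The plan is to deduce the corollary from theorem \ref{main2}, the point being that the only obstruction to passing from the N\'eron--Severi group to the full Picard group is the abelian variety $\pic^0(X)$, whose vanishing is controlled by $H^1(X,\QQ)$.

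First I would record that the hypotheses of theorem \ref{NP} are satisfied, so Perrin's result gives $\NS(X)=\ZZ$; in particular $\dim N^1(X)=1$. Next, since $P$ is rational homogeneous, bulkiness of $X$ is equivalent to strict nefness by remark \ref{compare}. Thus $X\subset P$ is a smooth subvariety of dimension $n$ which is strictly nef and satisfies $\dim N^1(X)=1$, and the inequality $n\ge r+1$ holds automatically (indeed $2r\le\coeff(P)-2\le\dim P-2=n+r-2$ already forces $n\ge r+2$). Theorem \ref{main2} therefore applies and yields
\[ H^1(X)_{\text{van}}=\ker\bigl(H^1(X,\QQ)\to H^{2r+1}(P,\QQ)\bigr)=0\ .\]
A rational homogeneous variety admits a cell decomposition with cells of even real dimension (Bruhat decomposition), hence has no odd cohomology; in particular $H^{2r+1}(P,\QQ)=0$. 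Combined with the vanishing of the kernel just obtained, this forces $H^1(X,\QQ)=0$.

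Finally I would translate this back to $\pic$. Since $X$ is smooth projective, $\dim\pic^0(X)=h^{0,1}(X)=\tfrac12\dim_\QQ H^1(X,\QQ)=0$, so $\pic^0(X)=0$ and the natural surjection in the exact sequence $0\to\pic^0(X)\to\pic(X)\to\NS(X)\to 0$ is an isomorphism. Together with $\NS(X)=\ZZ$ this gives $\pic(X)=\ZZ$. I expect no serious obstacle here beyond marshalling the two external inputs: the rank one statement of theorem \ref{NP} (needed precisely to supply the hypothesis $\dim N^1(X)=1$ to theorem \ref{main2}) and the vanishing of odd cohomology of $P$; once those are in place, the reduction of $\pic$ to $\NS$ via $H^1(X,\QQ)=0$ is routine.
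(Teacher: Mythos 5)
Your proposal is correct and follows essentially the same route as the paper: Perrin's theorem supplies $\dim N^1(X)=1$, remark \ref{compare} converts bulkiness to strict nefness, theorem \ref{main2} kills $H^1(X)_{\text{van}}$ (hence $H^1(X,\QQ)$, since $P$ has no odd cohomology), and the exponential sequence identifies $\pic(X)$ with $\NS(X)=\ZZ$. You merely spell out the steps the paper leaves implicit, including the useful check that $2r\le\coeff(P)-2$ already forces $n\ge r+1$.
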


\begin{proof} Note that bulkiness and strict nefness coincide on $P$ (remark \ref{compare}). Perrin has proven \cite[Theorem 0.10]{Per2} that the N\'eron--Severi group $\NS(X)$ is $\ZZ$, so that $N^1(X)=\RR$. The result now follows from theorem \ref{main2}, in view of the exact sequence (coming from the exponential sequence)
  \[   H^1(X,\ZZ)\ \to\ H^1(X,\OO)\ \to\ \pic (X)\ \to\ \NS(X)\ \to\ 0\ .\]
  
\end{proof}

\section{Not so bulky subvarieties}


In this section, we consider a refinement of theorem \ref{main} for certain special ambient spaces $P$. The connectedness result of this section (proposition \ref{spher}) improves on theorem \ref{main} because it applies to subvarieties $X$ that may fail to be bulky (cf. remark \ref{strict}).

\begin{definition}\label{sph} Let $G$ be a connected reductive algebraic group. A {\em spherical variety\/} is a normal $G$--variety for which there is a Borel subgroup $B\subset G$
with a dense orbit.
\end{definition}
 
 \begin{remark} More on spherical varieties can be found in \cite{Per3}, \cite{Br}, \cite{Br2} and the references given there.
 \end{remark}

  \begin{proposition}\label{spher} Let $P$ be a smooth projective spherical variety of dimension $n+r$. Let $X\subset P$ be a smooth subvariety of dimension $n\ge r+1$, verifying the following:
  
  \noindent
  (\rom1) $X$ is in general position with respect to the $n$--dimensional orbit closures on $P$;

 \noindent
 (\rom2) $X\subset P$ is big.
 
 Then
    \[  H^1(X,\QQ)=0\ .\]
  \end{proposition}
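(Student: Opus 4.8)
The plan is to run the proof of theorem \ref{main} almost verbatim, using hypothesis (\rom2) where that proof used strict nefness together with closedness of $\eff^n(P)$, and using hypothesis (\rom1) to guarantee that the relevant effective cycles pull back to effective cycles.

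First I would reduce to the case $n=r+1$, exactly as in theorem \ref{main}. Assuming $n>r+1$, choose a generic complete intersection of ample divisors $P^\prime\subset P$ with $\dim P^\prime=2r+1$ and set $X^\prime=X\cap P^\prime$, which is smooth of dimension $n^\prime=r+1$ by Bertini; this yields a fibre diagram
\[ \begin{array}[c]{ccc}
X^\prime & \xrightarrow{\tau^\prime} & P^\prime \\
\ \downarrow{g} & & \ \ \downarrow{f} \\
X & \xrightarrow{\tau} & P\ .\\
\end{array} \]
Since $P^\prime$ meets $X$ properly, $f^\ast[X]=[X^\prime]$ in $N^r(P^\prime)$, hence $(\tau^\prime)^\ast[X^\prime]=g^\ast\tau^\ast[X]$ in $N^r(X^\prime)$. (When $n=r+1$ one simply takes $P^\prime=P$, $X^\prime=X$.) By weak Lefschetz $H^1(X,\C)$ injects into $H^1(X^\prime,\C)$, so it suffices to show $H^1(X^\prime)_{\text{van}}=0$ and then run a weak--Lefschetz comparison.

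The heart of the matter --- and the step I expect to be the main obstacle --- is to show that $(\tau^\prime)^\ast[X^\prime]\in N^rH^{2r}(X^\prime,\QQ)$ is homologically big. Since $\dim X^\prime=r+1$ this is a curve class, so homological and numerical bigness coincide (remark \ref{D}) and it is enough to prove numerical bigness. The naive approach --- pull back the pseudo--effective class $[X]-\epsilon A$ to $X^\prime$ --- fails, because pullback need not preserve pseudo--effectivity of higher--codimension cycles; this is exactly where sphericity enters. Since $P$ is spherical, a Borel subgroup acts with finitely many orbits, so by \cite[Corollary to Theorem 1]{FMSS} the cone $\eff^r(P)=\psef^r(P)$ is closed and spanned by the finitely many $n$--dimensional orbit closures $Z_1,\dots,Z_k$. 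Let $A\subset P$ be an intersection of $r$ ample divisors. By hypothesis (\rom2) there is $\epsilon>0$ with $[X]-\epsilon A=\sum_i a_i[Z_i]$, all $a_i\ge 0$; pulling back along $\tau\circ g\colon X^\prime\to P$ gives
\[ (\tau^\prime)^\ast[X^\prime]\ =\ \epsilon\,(\tau g)^\ast A\ +\ \sum_i a_i\,(\tau g)^\ast[Z_i]\quad\text{in}\ N^r(X^\prime)\ .\]
Here $(\tau g)^\ast A$ is an intersection of $r$ ample divisors on $X^\prime$, hence big (\cite[Lemma 2.11]{FL}, or theorem \ref{vol}); and, choosing $P^\prime$ generic with respect to the finitely many subvarieties $X\cap Z_i$ --- legitimate since, by hypothesis (\rom1), each $X\cap Z_i$ is of pure dimension $n-r$ --- each $(\tau g)^\ast[Z_i]=[X^\prime\cap Z_i]$ is an effective cycle of pure dimension $1$, i.e.\ of codimension $r$ on $X^\prime$. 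Therefore $(\tau^\prime)^\ast[X^\prime]$ lies in the sum of the interior of $\psef^r(X^\prime)$ and $\psef^r(X^\prime)$, hence in the interior; it is big.

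Granting this, the remainder is copied from theorem \ref{main}: lemma \ref{lf} gives that $\cup(\tau^\prime)^\ast[X^\prime]\colon \gr^0_FH^1(X^\prime,\C)\to\gr^r_FH^{2r+1}(X^\prime,\C)$ is injective, Hodge symmetry upgrades this to injectivity of $\cup(\tau^\prime)^\ast[X^\prime]$ on all of $H^1(X^\prime,\C)$, the normal bundle formula factors this map through $(\tau^\prime)_\ast\colon H^1(X^\prime,\C)\to H^{2r+1}(P^\prime,\C)$ (forcing it to be injective), and the weak--Lefschetz commutative square (using $\dim P^\prime=2r+1$ and $\dim X^\prime\ge 2$) transports injectivity to $\tau_\ast\colon H^1(X,\C)\to H^{2r+1}(P,\C)$, i.e.\ $H^1(X)_{\text{van}}=0$. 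Finally, $P$ being spherical has no odd cohomology (\cite[Corollary to Theorem 2]{FMSS}), so $H^{2r+1}(P,\C)=0$, and hence $H^1(X,\QQ)=0$.
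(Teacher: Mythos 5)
Your proof is correct, and its overall architecture (cut down to a curve class on a linear section $X^\prime$ of dimension $r+1$, prove homological bigness of $(\tau^\prime)^\ast[X^\prime]$, apply lemma \ref{lf}, then transport injectivity back via weak Lefschetz and use the vanishing of odd cohomology of $P$ from \cite{FMSS}) coincides with the paper's. Where you differ is in the key bigness step. The paper proceeds in two stages: first (lemma \ref{OK}) it shows $\tau^\ast[X]\in N^r(X)$ is big by a duality argument --- hypothesis (\rom1) gives $\tau^\ast(\eff^r(P))\subset\eff^r(X)$, hence dually $\tau_\ast(\nef^{n-r}(X))\subset\nef^n(P)$, and one tests $\tau^\ast[X]-\epsilon\tau^\ast(A^r)$ against nef classes via the projection formula, using hypothesis (\rom2) upstairs; then (lemma \ref{Xprime}) it writes the big class as $\tau^\ast[X]=A^r+e$ with $A$ ample on $X$ and $e$ effective, and restricts this decomposition to a generic $X^\prime$. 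You instead exploit the polyhedral structure of $\eff^r(P)$ directly: decompose $[X]-\epsilon A^r$ as a nonnegative combination of the codimension--$r$ orbit closures $Z_i$ upstairs on $P$, and pull back term by term, with hypothesis (\rom1) guaranteeing each $\tau^\ast[Z_i]$ is effective (strictly speaking $(\tau g)^\ast[Z_i]$ is an effective cycle supported on $X^\prime\cap Z_i$ with multiplicities, not literally $[X^\prime\cap Z_i]$, but effectivity is all you need). This is a legitimate shortcut: it avoids the cone--duality step entirely and makes the roles of the two hypotheses more transparent, at the cost of invoking the finite generation of $\eff^r(P)$ by orbit closures in a slightly stronger form (which \cite{FMSS} does provide). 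Both arguments use the hypotheses in essentially the same way, so nothing is lost or gained in generality.
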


  \begin{proof} As before, in case $n>r+1$, we consider a fibre diagram
  \[ \begin{array}[c]{ccc}
               X^\prime &\xrightarrow{\tau^\prime}& P^\prime\\
             \ \   \downarrow{g}&& \downarrow\\
               X&\xrightarrow{\tau}&\ \ P\ ,\\
               \end{array}\]
               where $P^\prime\subset P$ is a smooth complete intersection of dimension $n^\prime+r$, and $X^\prime\subset X$ is smooth of dimension $n^\prime$, and we have equality
               $n^\prime=r+1$.

  \begin{lemma}\label{OK} The class $\tau^\ast[X]\in N^r(X)$ is big.
  \end{lemma}
  
  \begin{proof}
  As the cone $\eff^{r}(P)$ is generated by the $n$--dimensional orbit closures \cite{FMSS}, assumption (\rom1)
  implies that 
    \[ \tau^\ast \bigl(\eff^{r}(P)\bigr)\ \subset\ \eff^{r}(X)\ .\]
    Dually, this amounts to an inclusion
    \[  \tau_\ast \bigl( \nef^{n-r}(X)\bigr)\ \subset\ \nef^n(P)\ .\]
    Let $A\in N^1(P)$ denote the class of an ample divisor. The class $\tau^\ast(A^r)$ lies in the relative interior of $\eff^r(X)$. Hence, proving lemma \ref{OK} is equivalent to showing
    \begin{equation}\label{biginc} \tau^\ast[X] - \epsilon \tau^\ast (A^r)\ \ \in \eff^r(X)\ ,\end{equation}
    for some $\epsilon >0$ sufficiently small.
    
        Let $D\in\nef^{n-r}(X)$. As we have seen, $\tau_\ast(D)\in \nef^n(P)$. It follows that
      \[   \bigl(\tau^\ast[X] -\epsilon \tau^\ast(A^r)\bigr)\cdot D= \bigl([X]-\epsilon A^r\bigr)\cdot \tau_\ast(D) \ge 0\ ,\]
      for some $\epsilon >0$ sufficiently small. This proves inclusion (\ref{biginc}), and hence lemma \ref{OK}.   
      \end{proof}   
    
  \begin{lemma}\label{Xprime} The class $(\tau^\prime)^\ast[X^\prime]\in N^r(X^\prime)$ is homologically big.
  \end{lemma}
  
  \begin{proof} Since $\tau^\ast[X]$ is big (lemma \ref{OK}), we can write
    \[ \tau^\ast[X]= A^r + e\ \ \ \hbox{in}\ N^r(X)\ ,\]
    where $A$ is an ample divisor on $X$, and $e$ is an effective class. (Here, we have again used the fact that complete intersection classes $A^r$ are big; this is \cite[Lemma 2.11]{FLe}, or, alternatively, can be seen using the volume--type function of theorem \ref{vol}.) For a generic choice of $X^\prime$, the restriction $e^\prime=g^\ast(e)$ is still effective, and (obviously) $A^\prime=g^\ast(A)$ is still ample. It follows that
    \[ (\tau^\prime)^\ast[X^\prime] = (A^\prime)^r + e^\prime\ \ \ \hbox{in}\ N^r(X^\prime)\ \]
  is big. 
  
  Because $r=n^\prime-1$ (i.e., we look at a curve class on $X^\prime$) the class $(\tau^\prime)^\ast[X^\prime]$ is also homologically big (remark \ref{D}).  
  \end{proof}  
      
The rest of the argument is identical to that of theorem \ref{main}: Applying lemma \ref{lf} to the homologically big class $(\tau^\prime)^\ast[X^\prime]$, we find that
  \[ (\tau^\prime)_\ast\colon\ \ H^1(X^\prime,\QQ)\ \to\ H^{2r+1}(P^\prime,\QQ) \]
  is injective. The commutative diagram
    \[ \begin{array}[c]{ccc}
          H^1(X,\C)&\xrightarrow{\tau_\ast}& H^{2r+1}(P,\C)\\
          \downarrow&&\downarrow\\
          H^1(X^\prime,\C)&\xrightarrow{(\tau^\prime)_\ast}& H^{2r+1}(P^\prime,\C)\\
          \end{array}\]
       (where vertical arrows are injective by weak Lefschetz) then proves the proposition.      
    \end{proof}

  \begin{remark}\label{strict} Let $X$ be a smooth projective spherical variety. It is known \cite[Theorem 1.1]{QLi} that there are inclusions of cones
    \[ \nef^j(P)\ \subset\ \eff^j(P)\ \ \ \hbox{for\ all\ }j\ .\]
  That is, any bulky subvariety $X\subset P$ verifies hypothesis (\rom2) of proposition \ref{spher}. 
  
  We can say more: as shown in \cite{QLi}, there are ``many'' spherical varieties $P$ for which there are {\em strict\/} inclusions
    \[ \nef^j(P)\ \subsetneqq\ \eff^j(P)\ \ \ \hbox{for\ all\ }j\ . \]
 (More precisely: let $P$ be either a toric variety different from a product of projective spaces, or a toroidal spherical variety different from a rational homogeneous space. Then these inclusions are strict for all $j$ \cite[Theorem 1.2]{QLi}.) The conclusion is that in these cases proposition \ref{spher} gives a connectedness result even for subvarieties $X$ that fail to be bulky; it suffices that $X$ be only ``slightly bulky'', in the sense of hypothesis (\rom2).
   \end{remark}

  

\section{A conditional result} 

In this final section, we prove a conditional connectedness result for cohomology groups in degree $>1$. This result is conditional to one of the standard conjectures. The reason we need to assume a standard conjecture is that there might a priori be a difference between the two notions of bigness defined in section 2 (cf. remark \ref{D}).

\begin{theorem}\label{main3} Let $P$ be a smooth projective variety of dimension $n+r$, and $\tau\colon X\subset P$ a smooth subvariety of dimension $n$.
 Assume the following:
 
\noindent
(\rom1) There is an inclusion of cones
  \[    \nef^n(P)\ \subset\ \eff^n(P)\ ;\]
  
 \noindent
 (\rom2) $X\subset P$ is strictly nef;
 
 \noindent
 (\rom3)  There is an inclusion
   \[ \tau^\ast\bigl( \psef^r(P)\bigr)\ \subset\ \psef^r(X)\ ;\]

\noindent
(\rom4) Either $r=2$, or the standard Lefschetz conjecture $B(X)$ holds.

Then
    \[  \gr^0_F H^{j}(X)_{\text{van}}=0\ \ \ \hbox{for\ all\ }j\le n-r\ . \]
    \end{theorem}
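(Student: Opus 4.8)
The plan is to mimic the proof of theorem \ref{main}, but now tracking cohomology in all degrees $j \le n-r$ simultaneously, and using hypothesis (\rom3) directly on $X$ rather than reducing to a complete intersection. First I would observe that it suffices to prove that the class $\tau^\ast[X] \in N^r(X)$ is \emph{homologically} big; once we know this, lemma \ref{lf} applied to $M = X$ and $\gamma = \tau^\ast[X]$ gives that
\[
\cup\,\tau^\ast[X]\colon\ \gr^0_F H^{n-r}(X,\C)\ \to\ \gr^r_F H^{n+r}(X,\C)
\]
is injective, and more generally (after intersecting further with a power of an ample class, using hard Lefschetz on $X$) that $\cup\,\tau^\ast[X]$ is injective on $\gr^0_F H^j(X,\C)$ for every $j \le n-r$. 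Since this cup product factors as $H^j(X) \xrightarrow{\tau_\ast} H^{j+2r}(P) \xrightarrow{\tau^\ast} H^{j+2r}(X)$ by the normal bundle formula, injectivity of the composite forces $\tau_\ast$ to be injective on $\gr^0_F$, i.e. $\gr^0_F H^j(X)_{\text{van}} = 0$ for all $j \le n-r$. This is exactly the desired conclusion.

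The heart of the matter is therefore establishing that $\tau^\ast[X]$ is homologically big. For the ordinary bigness of $\tau^\ast[X] \in N^r(X)$ I would repeat the argument of lemma \ref{OK}: hypothesis (\rom2) (strict nefness of $X$) together with hypothesis (\rom1) ($\nef^n(P) \subset \eff^n(P)$) shows that $[X]$ lies in the relative interior of $\nef^r(P)$; combining this with hypothesis (\rom3), $\tau^\ast(\psef^r(P)) \subset \psef^r(X)$, and the fact that complete-intersection classes $A^r$ are big in $N^r(X)$ (\cite[Lemma 2.11]{FLe}, or theorem \ref{vol}), one tests $\tau^\ast[X] - \epsilon\, \tau^\ast(A^r)$ against an arbitrary $D \in \nef^1(X)$, pushes forward, and uses that $[X]$ is strictly positive on $\psef^n(P) \setminus \{0\}$. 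Concretely,
\[
\bigl(\tau^\ast[X] - \epsilon\,\tau^\ast(A^r)\bigr)\cdot D = \bigl([X] - \epsilon A^r\bigr)\cdot \tau_\ast(D) \ge 0
\]
for $\epsilon$ small, since $\tau_\ast(D)$ is pseudo-effective. So $\tau^\ast[X]$ is big in $N^r(X)$.

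The passage from big to \emph{homologically} big is precisely where hypothesis (\rom4) enters. If $r = 2$, then by remark \ref{D} (standard conjecture $D$ is known in codimension $2$, \cite{Lie}) the two notions coincide automatically, so bigness already gives homological bigness. Otherwise, the standard Lefschetz conjecture $B(X)$ implies $D(X)$ (this is classical — $B(X) \Rightarrow$ the Künneth and Lefschetz standard conjectures $\Rightarrow D(X)$), so again $N^r H^{2r}(X,\RR) \cong N^r(X)$ and the two bigness notions agree on $X$. I expect this step — justifying that homological and numerical bigness coincide on $X$ under (\rom4) — to be the only delicate point, and it is genuinely the reason the theorem must be stated conditionally; everything else is a transcription of the argument already carried out for theorem \ref{main}. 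Finally I would double-check the degree bookkeeping in the factorization of $\cup\,\tau^\ast[X]$ and the use of hard Lefschetz on $X$ to pass from $j = n-r$ down to smaller $j$: one applies lemma \ref{lf} after cupping with $L^{n-r-j}$ for an ample $L$, and $\gr^0_F$ is preserved since $L$ has Hodge type $(1,1)$.
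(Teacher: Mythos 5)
Your argument for the top degree $j=n-r$ is correct and coincides with the paper's (which also works directly on $X$ in that case, via the homological bigness of $\tau^\ast[X]$ established exactly as you describe). The divergence is in how you descend to $j<n-r$: the paper slices by generic hyperplane sections, replacing $X\subset P$ by $X'\subset P'$ with $\dim X'=j+r$, so that the relevant class $(\tau')^\ast[X']$ \emph{stays in codimension $r$} and Fu's lemma applies in the complementary degree on $X'$; you instead keep $X$ fixed and cup with $L^{n-r-j}$, which replaces the class $\tau^\ast[X]$ by $\gamma'=\tau^\ast[X]\cdot L^{n-r-j}$ of codimension $n-j$.

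This is where there is a genuine gap. To apply lemma \ref{lf} to $\gamma'$ you need $\gamma'$ to be \emph{homologically} big in $N^{n-j}H^{2(n-j)}(X,\RR)$, and the only available route from numerical to homological bigness is the standard conjecture $D(X)$ in codimension $n-j$ (remark \ref{D}). Under hypothesis (\rom4) with $B(X)$ this is fine, since $B(X)\Rightarrow D(X)$ in all codimensions (and the numerical bigness of $\gamma'$ itself follows from writing $\tau^\ast[X]=\epsilon A^r+e$ with $e$ pseudo-effective and using that pseudo-effective times nef powers is pseudo-effective and that complete-intersection classes are big --- a step you should make explicit, as ``big $\cdot$ ample'' is not automatic for higher-codimension cycles). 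But in the unconditional branch $r=2$, for any $j<n-2$ your class $\gamma'$ has codimension $n-j>2$, where $D$ is not known, so you cannot upgrade its numerical bigness to homological bigness; your argument then only proves the case $j=n-2$ of the theorem when $r=2$. This is precisely the failure mode the paper's hyperplane-section reduction is designed to avoid: slicing lowers the cohomological degree while keeping the cycle class in codimension $r$ (where either $r=2$ or $B(X')$, inherited from $B(X)$, identifies the two bigness notions). To repair your proof you should restore the reduction to a complete intersection $X'\subset P'$ with $j=\dim X'-r$ for the degrees $j<n-r$.
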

    
  \begin{proof} First, in case $j<n-r$ we take generic hyperplane sections. That is, we consider (as before) a fibre diagram
    \[ \begin{array}[c]{ccc}
               X^\prime &\xrightarrow{\tau^\prime}& P^\prime\\
              \ \  \downarrow{{}^g}&& \downarrow\\
               X&\xrightarrow{\tau}&\ \ P\ ,\\
               \end{array}\]
               where $P^\prime\subset P$ is a smooth complete intersection of dimension $n^\prime+r$, and $X^\prime\subset X$ is smooth of dimension $n^\prime$, and we have equality
               $j=n^\prime-r$.
   
  \begin{lemma}\label{X} The class $\tau^\ast[X]\in N^r(X)$ is (homologically) big.
  \end{lemma}
  
 \begin{proof} Let $A\in N^1(P)$ be an ample divisor class. To prove bigness of $\tau^\ast[X]$, it suffices to prove
   \begin{equation}\label{incr}   \tau^\ast[X] -\epsilon \tau^\ast(A^r)\ \ \in \psef^r(X)\ ,\end{equation}
   for some $\epsilon >0$.
   
  Let $a\in\nef^{n-r}(X)$. It follows from assumption (\rom3) (by duality) that
     \[ \tau_\ast(a)\in\nef^n(P)\ .\]
     It follows from assumption (\rom1) that $\tau_\ast(a)$ is effective.
   Also, assumptions (\rom1) and (\rom2) combined imply that $[X]\in N^r(P)$ is big. Now, using the projection formula we find that
        \[    \bigl(\tau^\ast [X] -\epsilon \tau^\ast(A^r)\bigr) \cdot a= \bigl([X]-\epsilon A^r\bigr)\cdot  \tau_\ast (a)
                                                                                       \ge 0\ ,\]
                                                                                       for some sufficiently small $\epsilon >0$.
     This proves inclusion (\ref{incr}) and hence the bigness of $\tau^\ast[X]$.                                                                                       
   Since we have assumed that either $r=2$ or $B(X)$ holds, the two notions of bigness coincide (remark \ref{D}), and so $\tau^\ast[X]$ is homologically big.
    \end{proof}  
   
   \begin{lemma} The class $(\tau^\prime)^\ast[X^\prime]\in N^r(X^\prime)=N^r H^{2r}(X^\prime,\RR)$ is homologically big.
   \end{lemma}
   
   \begin{proof} The fact that $(\tau^\prime)^\ast[X^\prime]$ is big   
    can be deduced from lemma \ref{X} along the lines of the proof of lemma \ref{Xprime}.

    In case $r=2$, the two notions of bigness coincide (remark \ref{D}). Otherwise, since property $B(X)$ implies $B(X^\prime)$ \cite{K}, the two notions of bigness also coincide on $X^\prime$; this proves the lemma.
         \end{proof}
   
         Applying lemma \ref{lf} to the homologically big class $(\tau^\prime)^\ast[X^\prime]\ \in N^r(X^\prime)=N^r H^{2r}(X^\prime,\RR)$, we find that
   \[   \cup (\tau^\prime)^\ast[X^\prime]\colon\ \  \gr^0_F  H^{n^\prime-r}(X^\prime,\C)\ \to\ \gr^{r}_F H^{n^\prime+r}(X^\prime,\C)    \]
   is injective (and hence, for dimension reasons, an isomorphism).  On the other hand, it follows from the normal bundle formula that there is a factorization  
     \[\cup (\tau^\prime)^\ast[X^\prime]\colon\ \   \gr^0_F H^{n^\prime-r}(X^\prime,\C)\ \xrightarrow{(\tau^\prime)_\ast}\ \gr^r_F H^{n^\prime+r}(P^\prime,\C)\ 
      \xrightarrow{(\tau^\prime)^\ast}\ 
          \gr^r_F H^{n^\prime+r}(X^\prime,\C)  \ .\]
         We can thus conclude that
         \[ (\tau^\prime)_\ast\colon\ \ \gr^0_F H^{j}(X^\prime,\C)\ \to\ \gr^r_F H^{j+2r}(P^\prime,\C) \]
         is injective.
     
      To return to $X$, we consider a commutative diagram
         \[ \begin{array}[c]{ccc}
          \gr^0_F H^j(X,\C)&\xrightarrow{\tau_\ast}& \gr^r_F H^{j+2r}(P,\C)\\
          \downarrow&&\downarrow\\
         \gr^0_F H^j(X^\prime,\C)&\xrightarrow{(\tau^\prime)_\ast}& \gr^r_F H^{j+2r}(P^\prime,\C)\\
          \end{array}\]
          where vertical arrows are injective (this is an application of weak Lefschetz; note that $\dim X^\prime=n^\prime>j$ and $\dim P^\prime=j+2r$). It follows from this commutative diagram that
          \[ \tau_\ast\colon\ \ \gr^0_F H^j(X,\C)\ \to\ \gr^r_F H^{j+2r}(P,\C)\]
          is injective.
        
  \end{proof}

\begin{corollary}\label{AC+} Let $n,r$ be positive integers with $n\ge r+2$. Let $P$ be a Grassmannian of lines in a projective space, and $\dim P=n+r$. Let $X\subset P$ be a smooth bulky subvariety of dimension $n$. Assume either $r=2$ or $B(X)$ holds. Then
  \[ H^2(X,\ZZ)=\ZZ\ .\]
  \end{corollary}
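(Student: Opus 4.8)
The plan is to deduce Corollary \ref{AC+} from Theorem \ref{main3} by verifying its four hypotheses for the pair $(X,P)$, and then translating the cohomological vanishing into the statement about $H^2(X,\ZZ)$. First I would record that $P$ is a Grassmannian, hence a rational homogeneous variety; in particular it is spherical, so by \cite{QLi} (as noted in remark \ref{strict}) we have $\nef^j(P)\subset\eff^j(P)$ for all $j$, which gives hypothesis (\rom1) of theorem \ref{main3}. Since $P$ is homogeneous, bulkiness of $X$ is equivalent to strict nefness (remark \ref{compare}), so hypothesis (\rom2) holds. For hypothesis (\rom3), the inclusion $\tau^\ast(\psef^r(P))\subset\psef^r(X)$ should follow because on a homogeneous variety every effective class is represented by an effective cycle in general position with respect to $X$, so its restriction to $X$ stays effective (hence pseudo-effective); this is the same mechanism already used in remark \ref{compare} and in the proof of proposition \ref{spher} via \cite{FMSS}. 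Hypothesis (\rom4) is exactly the hypothesis of the corollary: either $r=2$ or $B(X)$ holds.

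With all four hypotheses in place, theorem \ref{main3} yields $\gr^0_F H^j(X)_{\text{van}}=0$ for all $j\le n-r$. Since $n\ge r+2$, the value $j=2$ is allowed, so $\gr^0_F H^2(X)_{\text{van}}=0$. Next I would upgrade this to the vanishing of all of $H^2(X)_{\text{van}}$: because $H^2(X)_{\text{van}}=\ker(H^2(X,\C)\to H^{2r+2}(P,\C))$ is a Hodge substructure (the remark after the definition of $H^j(X)_{\text{van}}$) of weight $2$, its Hodge decomposition is $H^{2,0}\oplus H^{1,1}\oplus H^{0,2}$; the piece $\gr^0_F H^2 = H^{0,2}$ vanishes, hence by complex conjugation $H^{2,0}$ vanishes too, so $H^2(X)_{\text{van}}$ is purely of type $(1,1)$. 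I then need to see that a pure-$(1,1)$ vanishing class is actually zero. This is where I would invoke that $P$ is a Grassmannian of lines with $\dim P=n+r$ and $n\ge r+2$, so $r\le n-2$ and weak Lefschetz / the Barth–Lefschetz theory for Grassmannians (cf.\ \cite{Laz}, \cite{AC}) forces the restriction $H^2(P,\QQ)\to H^2(X,\QQ)$ to be an isomorphism onto the non-vanishing part; combined with $\pic(X)=\ZZ$ (which is theorem \ref{AC}, the Arrondo–Caravantes result, already available for these $X$) the Néron–Severi-type reasoning shows $H^2(X,\QQ)$ is one-dimensional and the vanishing cohomology is $0$.

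Finally I would pass from $\QQ$- to $\ZZ$-coefficients. Having $H^2(X,\QQ)=\QQ$ generated by the restriction of the ample generator (via $\pic(X)=\ZZ$ from theorem \ref{AC}, or equivalently corollary \ref{NP+}/\ref{AC+}'s companion statements), and having already the vanishing of $H^1(X,\QQ)$ from corollary \ref{fmss} (hence $H^1(X,\OO_X)=0$ and $H^2(X,\ZZ)$ torsion-free by the exponential sequence together with $H^1(X,\ZZ)=0$ — here I would also need $X$ simply connected, which for bulky $X$ in a Grassmannian is Debarre's theorem \ref{OD}), one concludes $H^2(X,\ZZ)$ is a free $\ZZ$-module of rank $1$, i.e.\ $H^2(X,\ZZ)=\ZZ$.

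The main obstacle I anticipate is the last ``$(1,1)$-class is zero'' step together with the integral refinement: theorem \ref{main3} only controls $\gr^0_F$, so it gives no direct information on the $(1,1)$-part of the vanishing cohomology, and closing that gap genuinely requires importing the strong Barth-type results available specifically for Grassmannians (and the Arrondo–Caravantes Picard computation), rather than anything coming out of the bigness machinery of this note. Verifying hypothesis (\rom3) carefully — that restriction preserves pseudo-effectivity in codimension $r$ on a homogeneous $P$ — is routine by the moving-lemma-type argument for orbit closures, but it is the one hypothesis that is not completely immediate from the statements quoted, so I would spell it out.
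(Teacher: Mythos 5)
Your verification of hypotheses (\rom1)--(\rom4) of theorem \ref{main3} is exactly what the paper does (homogeneity gives (\rom1) and (\rom3), remark \ref{compare} gives (\rom2)), and you correctly extract $\gr^0_F H^2(X)_{\text{van}}=0$. The divergence --- and the problem --- is in your endgame. The paper's finish is much shorter than what you propose: since $H^{2+2r}(P,\C)$ is pure of type $(r+1,r+1)$ for a Grassmannian, the vanishing $\gr^0_F H^2(X)_{\text{van}}=0$ already says $H^2(X,\OO_X)=H^{0,2}(X)=0$; then the exponential sequence
\[ H^1(X,\OO_X)\ \to\ \pic(X)\ \to\ H^2(X,\ZZ)\ \to\ H^2(X,\OO_X)=0 \]
exhibits $H^2(X,\ZZ)$ as a quotient of $\pic(X)=\ZZ$ (Arrondo--Caravantes) containing the non-torsion class $c_1$ of an ample line bundle, whence $H^2(X,\ZZ)=\ZZ$. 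There is no need to prove that all of $H^2(X)_{\text{van}}$ vanishes, no need to control the $(1,1)$-part separately, and no need for torsion-freeness of $H^2(X,\ZZ)$ or simple-connectedness of $X$.

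The step in your write-up that does not hold up as stated is the appeal to ``weak Lefschetz / Barth--Lefschetz theory for Grassmannians'' to force $H^2(P,\QQ)\to H^2(X,\QQ)$ to be an isomorphism. For a Grassmannian of lines with $n\ge r+2$ this is precisely the kind of statement that is \emph{not} available in the classical Barth--Lefschetz range --- circumventing it is the whole point of the Arrondo--Caravantes theorem --- so invoking it is close to circular. If you insist on computing $H^2(X,\QQ)$, the correct route is: $H^{2,0}=H^{0,2}=0$ forces $H^2(X,\QQ)$ to be generated by $(1,1)$-classes, hence (Lefschetz $(1,1)$) by $\NS(X)\otimes\QQ$, which is one-dimensional because $\pic(X)=\ZZ$; but even this is superfluous once you use the exponential sequence as above. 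In short: the obstacle you flag at the end is not a real obstacle, and the detour you take to get around it is where your argument would need repair.
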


\begin{proof} As mentioned in the introduction, Arrondo and Caravantes have proven \cite{AC} that $\pic(X)=\ZZ$. 

We now check that all assumptions of theorem \ref{main3} are satisfied. Any Grassmannian $P$ has $\nef^j(P)=\eff^j(P)$ for all $j$ so assumption (\rom1) is OK. Assumption (\rom2) is OK by remark \ref{compare}. Assumption (\rom3) of theorem \ref{main3} is satisfied, because (by homogeneity) any subvariety $a\subset P$ is homologically equivalent to a subvariety in general position with respect to $X$.
Applying theorem \ref{main3}, we find that $H^2(X,{\mathcal O}_X)=0$. The result now follows from the exponential sequence.
 \end{proof}
 
 \begin{corollary}\label{AC++} Let $P$ be a product $\PP^m\times\PP^m$, and let $X\subset P$ be a smooth subvariety of codimension $r$ and dimension $n\ge r+2$.
 Assume the two projection maps $X\to \PP^m$ are surjective. Assume also that either $r=2$ or $B(X)$ holds. Then
   \[ H^2(X,\ZZ)=\ZZ^2\ .\]
   \end{corollary}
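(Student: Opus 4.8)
The plan is to imitate the proof of corollary \ref{AC+}, with the Grassmannian of lines replaced by $P=\PP^m\times\PP^m$ and the tautological class $\OO_P(1)$ replaced by the two pullbacks $\ell_i:=p_i^\ast\OO_{\PP^m}(1)$, $i=1,2$. First I would note that the hypothesis forces $X$ to be bulky. Writing $[X]=\sum_{j=0}^{r}c_j\,h_1^jh_2^{r-j}$ in $N^r(P)$, surjectivity of $p_1$ (resp.\ $p_2$) is equivalent to $c_0>0$ (resp.\ $c_r>0$), and — since $X$ is irreducible and dominates both factors — a short dimension count (cut $X$ with a generic linear subspace of the first factor and project along $p_2$) forces all $c_j$ to be positive; thus $[X]$ is interior to $\eff^r(P)=\nef^r(P)$, which on the homogeneous variety $P$ means precisely that $X$ is bulky, equivalently strictly nef (remark \ref{compare}). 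Consequently corollary \ref{fmss} gives $H^1(X,\QQ)=0$ — in particular $H^1(X,\OO_X)=0$ and $\pic(X)=\NS(X)$ — while Debarre's theorem \ref{OD} gives $\pi_1(X)=0$. Moreover $\NS(X)$ has rank $2$: the classes $\ell_1,\ell_2$ are linearly independent (for instance $\ell_1^n=\ell_2^n=0$, while $\ell_1^m\ell_2^{\,n-m}=c_0>0$ and $\ell_1^{\,n-m}\ell_2^m=c_r>0$), and the bound $\operatorname{rk}\NS(X)\le 2$ is the $\PP^m\times\PP^m$--analogue of the Arrondo--Caravantes statement used in corollary \ref{AC+} (cf.\ \cite{Deb}).

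Next I would verify the four hypotheses of theorem \ref{main3}, exactly as in corollary \ref{AC+}: $(\rom1)$ and $(\rom3)$ hold because $P$ is homogeneous, so $\nef^\ast(P)=\eff^\ast(P)$ and every subvariety of $P$ is homologically equivalent to one in general position with respect to $X$; $(\rom2)$ is the bulkiness just obtained; $(\rom4)$ is the standing hypothesis. Since $n\ge r+2$, theorem \ref{main3} applies with $j=2$ and yields $\gr^0_F H^2(X)_{\text{van}}=0$. Now $H^{2r+2}(P,\C)$ is of pure Hodge type $(r+1,r+1)$, so $F^rH^{2r+2}(P,\C)=F^{r+1}H^{2r+2}(P,\C)$; hence the Gysin map $\tau_\ast\colon H^2(X,\C)\to H^{2r+2}(P,\C)$, a morphism of Hodge structures of type $(r,r)$, sends $\gr^0_F H^2(X,\C)$ to $\gr^r_F H^{2r+2}(P,\C)=0$. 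Therefore $\gr^0_F H^2(X,\C)\subset H^2(X)_{\text{van}}$, whence $\gr^0_F H^2(X,\C)=\gr^0_F H^2(X)_{\text{van}}=0$, i.e.\ $H^2(X,\OO_X)=0$.

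Finally, the exponential sequence
\[ H^1(X,\OO_X)\ \to\ \pic(X)\ \to\ H^2(X,\ZZ)\ \to\ H^2(X,\OO_X) \]
together with $H^1(X,\OO_X)=H^2(X,\OO_X)=0$ gives $H^2(X,\ZZ)\cong\pic(X)=\NS(X)$, which is torsion-free (as $\pi_1(X)=0$) of rank $2$, hence $\cong\ZZ^2$. The passage through theorem \ref{main3} (where, just as in corollary \ref{AC+}, the alternative ``$r=2$ or $B(X)$'' is what lets one replace bigness by homological bigness, remark \ref{D}) and the exponential sequence is a mechanical copy of corollary \ref{AC+}; the one substantial external ingredient is the identity $\NS(X)=\ZZ^2$ for bulky $X$ in this range, and its real content — the upper bound $\operatorname{rk}\NS(X)\le 2$, i.e.\ the absence of ``extra'' divisor classes coming from the fibres of the projections $X\to\PP^m$ — is the main obstacle.
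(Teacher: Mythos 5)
Your proposal follows essentially the same route as the paper: bulkiness of $X$ from the surjectivity of the two projections, the Arrondo--Caravantes computation $\pic(X)=\ZZ^2$ as the external input, theorem \ref{main3} (whose hypotheses hold by homogeneity of $P$) to get $H^2(X,\OO_X)=0$, and the exponential sequence to conclude; your extra remarks (that $H^{2r+2}(P)$ is of pure type $(r+1,r+1)$, so $\gr^0_F H^2(X)\subset H^2(X)_{\text{van}}$, and that $\pi_1(X)=0$ kills torsion) only make explicit what the paper leaves implicit. The one place you diverge is the claim that a ``short dimension count'' shows all coefficients $c_j$ of $[X]$ are positive: as stated this is not a proof --- cutting with a generic linear space $L_1\times\PP^m$ and projecting by $p_2$ only gives $c_j>0$ if you also know that $p_2$ restricted to $X\cap(L_1\times\PP^m)$ is generically finite, which does not follow from dimensions alone and is exactly the content of \cite[Proposition 2.6]{Deb}, the reference the paper invokes at this step. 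So either cite Debarre here (as the paper does) or supply the genuine argument (irreducibility of $X$ plus the fact that the support of the multidegree of an irreducible subvariety of $\PP^m\times\PP^m$ is an interval); with that repaired, your proof is correct and coincides with the paper's.
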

   
   \begin{proof} Arrondo and Caravantes have proven that $\pic(X)=\ZZ^2$ \cite[Theorem 3.1]{AC}. The assumption about the projection maps ensures that $X$ is bulky \cite[Proposition 2.6]{Deb}, hence (by homogeneity of $P$) strictly nef. Applying theorem \ref{main3}, we find that $H^2(X,{\mathcal O}_X)=0$.
  \end{proof}
  
  \begin{definition}[Perrin \cite{Per2}] Let $\GG_Q(p,m)$ and $\GG_\omega(p,2m)$ be the Grassmannians of isotropic subspaces of dimension $p$ in a vector space of dimension $m$ (resp. $2m$) endowed with a non--degenerate quadratic form $Q$ (resp. symplectic form $\omega$).
  \end{definition}
  
  \begin{corollary}\label{NP++} Let $n,r$ be positive integers with $n\ge r+3$. Let $P$ be $\GG_Q(2,2m+1)$, $\GG_\omega(2,2m)$ or $\GG_Q(2,4m)$. Let $X\subset P$ be a smooth bulky subvariety of dimension $n$ and codimension $r$. Assume either $r=2$, or $B(X)$ holds. Then
    \[ H^2(X,\ZZ)=\ZZ\ .\]
    \end{corollary}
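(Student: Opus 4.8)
The plan is to feed the three ambient spaces into theorem \ref{main3} in degree $j=2$, the assertion about the Picard group being already contained in Perrin's work. Indeed, $\GG_Q(2,2m+1)$, $\GG_\omega(2,2m)$ and $\GG_Q(2,4m)$ are rational homogeneous varieties of Picard number $1$, and Perrin's computation of the invariant $\coeff$ in \cite{Per2} gives $\coeff(P)=\dim P-1$ for each of them; since $\dim P=n+r$, the hypothesis $n\ge r+3$ is then literally equivalent to $2r\le\coeff(P)-2$, so corollary \ref{NP+} applies and yields $\pic(X)=\ZZ$ (in particular $N^1(X)=\RR$).

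Next I would verify the four hypotheses of theorem \ref{main3} for $\tau\colon X\hookrightarrow P$. Hypothesis (\rom1), $\nef^n(P)\subset\eff^n(P)$, holds because $P$, being rational homogeneous, is spherical, so the inclusion follows from \cite[Theorem 1.1]{QLi} (cf. remark \ref{strict}). Hypothesis (\rom2), strict nefness of $X$, follows from bulkiness of $X$ together with homogeneity of $P$ (remark \ref{compare}). Hypothesis (\rom3), $\tau^\ast(\psef^r(P))\subset\psef^r(X)$, comes from homogeneity: by Kleiman transversality every class in $\eff^r(P)$ is numerically equivalent to an effective cycle meeting $X$ properly, so $\tau^\ast(\eff^r(P))\subset\eff^r(X)$, and passing to closures gives the inclusion. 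Hypothesis (\rom4) is exactly the assumption that $r=2$ or $B(X)$ holds. Since $n\ge r+3$ forces $2\le n-r$, theorem \ref{main3} applies and gives $\gr^0_F H^2(X)_{\text{van}}=0$.

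It remains to upgrade this vanishing to the assertion of the corollary. Because $P$ is rational homogeneous, $H^{2k}(P,\C)$ is purely of type $(k,k)$ for every $k$, so $H^{r,r+2}(P)=0$; hence the Gysin map $\tau_\ast\colon H^2(X,\C)\to H^{2r+2}(P,\C)$, which is a morphism of Hodge structures of bidegree $(r,r)$, annihilates $H^{0,2}(X)=\gr^0_F H^2(X,\C)=H^2(X,\OO_X)$. Thus $H^2(X,\OO_X)\subset H^2(X)_{\text{van}}$, which combined with $\gr^0_F H^2(X)_{\text{van}}=0$ yields $H^2(X,\OO_X)=0$. The exponential sequence then shows that the first Chern class $\pic(X)\to H^2(X,\ZZ)$ is surjective, so $H^2(X,\ZZ)$ is a quotient of $\ZZ$; as $X$ is a projective variety of dimension $\ge 2$ its second Betti number is positive, and hence $H^2(X,\ZZ)=\ZZ$.

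I expect the only genuine friction to be the first step: one needs Perrin's computation of $\coeff(P)$ for these three families in hand, so as to see that $n\ge r+3$ is exactly the condition making corollary \ref{NP+} applicable. Once $\pic(X)=\ZZ$ is available, the cohomological part of the argument is a verbatim repetition of the proof of corollary \ref{AC+}.
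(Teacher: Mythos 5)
Your proof is correct and follows the same route as the paper: verify the hypotheses of theorem \ref{main3} via homogeneity of $P$, deduce $H^2(X,\OO_X)=0$ (using that $H^{2r+2}(P,\C)$ is pure of type $(r+1,r+1)$, exactly as in corollary \ref{AC+}), and finish with the exponential sequence together with $\pic(X)=\ZZ$. The only divergence is at the Picard--group step: the paper cites \cite[Corollary 0.11]{Per2} directly for $\pic(X)=\ZZ$ for these three families, rather than routing through corollary \ref{NP+} and the unverified identity $\coeff(P)=\dim P-1$, which disposes of the one piece of ``friction'' you flag.
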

    
 \begin{proof} Perrin has proven that $\pic(X)=\ZZ$ \cite[Corollary 0.11]{Per2}. Since $P$ is homogeneous, the conditions of theorem \ref{main3} are again fulfilled, so we also have $H^2(X,{\mathcal O}_X)=0$.
    \end{proof}

\begin{remark} It would be interesting if one could prove theorem \ref{main3} (or even the corollaries \ref{AC+} and \ref{AC++} and \ref{NP++}) for $r>2$ without assuming some standard conjecture for the subvariety $X$.
I have not been able to do so.
\end{remark}

\vskip0.6cm

\begin{acknowledgements} This note is a belated fruit of the 2014 Marrakech workshop on cones of positive cycle classes, which was a great occasion to learn about the body of work \cite{FLe}, \cite{FLe2}, \cite{Le}. Thanks to all the participants of this workshop. Many thanks to Yasuyo, Kai and Len for coming to Marrakech with me. Thanks to the referee for several very helpful remarks.
\end{acknowledgements}

\vskip0.6cm

\end{document}